\documentclass{birkjour}

\usepackage{graphicx}
\usepackage{amssymb}
\usepackage{amsthm}
\newtheorem{thm}{Theorem}
\newtheorem{cor}{Corollary}
\newtheorem{lem}{Lemma}
\newtheorem{pro}{Proposition}

\theoremstyle{remark}

\begin{document}
\title{Spherical bodies of constant width}

\pagestyle{myheadings} \markboth{Spherical bodies of constant width}{M. Lassak, M. Musielak}

\author{MAREK LASSAK AND MICHA\L \ MUSIELAK}
\address{Institute of Mathematics and Physics\\University of Technology and Life Sciences\\al. Kaliskiego 7\\85-796 Bydgoszcz, Poland}
\email{marek.lassak@utp.edu.pl, michal.musielak@utp.edu.pl}
\subjclass{52A55}
\keywords{Spherical convex body, spherical geometry, hemisphere, lune, width, constant width, thickness, diameter, extreme point.}
\date{} 

\begin{abstract}
The intersection $L$ of two different non-opposite hemispheres $G$ and $H$ of a $d$-dimensional sphere $S^d$ is called a lune. By the thickness of $L$ we mean the distance of the centers of the $(d-1)$-dimensional hemispheres bounding $L$. 
For a hemisphere $G$ supporting a %spherical 
convex body $C \subset S^d$ we define ${\rm width}_G(C)$ as 
the thickness of the narrowest lune or lunes of the form $G \cap H$ containing $C$.
%the minimum thickness of a lune over all lunes of the form $G \cap H$ containing $C$. 
%For every hemisphere $G$ supporting a spherical convex body $C \subset S^d$ we define ${\rm width}_G(C)$ as the minimum thickness of a lune of the form $G \cap H$ containing $C$. 
If ${\rm width}_G(C) =w$ for every hemisphere $G$ supporting $C$, we say that $C$ is a body of constant width $w$. We present properties of these bodies. In particular, we prove that the diameter of any spherical body $C$ of constant width $w$ on $S^d$ is $w$, and that if $w < \frac{\pi}{2}$, then $C$ is strictly convex. Moreover, we are checking when spherical bodies of constant width and constant diameter coincide.
\end{abstract}

\maketitle
 
\section{Introduction}

Consider the unit sphere $S^d$ in the $(d+1)$-dimensional Euclidean space $E^{d+1}$ for $d\geq 2$. 
The intersection of $S^d$ with any two-dimensional subspace of $E^{d+1}$ is called a {\it great circle} of $S^d$.
By a {\it $(d-1)$-dimensional great sphere} of $S^d$ we mean the common part of 
$S^d$ with any hyper-subspace of $E^{d+1}$.
The $1$-dimensional great spheres of $S^2$ are called {\it great circles}.
By a pair of {\it antipodes} of $S^d$ we understand a pair of points of intersection of $S^d$ with a straight line through the origin of $E^{d+1}$.

Clearly, if two different points $a, b \in S^d$ are not antipodes, there is exactly one great circle containing them.
As the {\it arc} $ab$ connecting $a$ and $b$ we define the shorter part of the great circle containing these points. 
The length of this arc is called the {\it spherical distance $|ab|$ of $a$ and $b$}, or shortly {\it distance}.
Moreover, we agree that the distance of coinciding points is $0$, and that of any pair of antipodes is $\pi$.

A {\it spherical ball $B_\rho(x)$ of radius $\rho \in (0, {\frac{\pi}{2}}]$}, or shorter {\it a ball} is the set of points of $S^d$ at distances at most $\rho$ from a fixed point $x$, which is called the {\it center} of this ball. 
An {\it open ball} (a {\it sphere}) is the set of points of $S^d$ having distance smaller than (respectively, exactly) $\rho$ from a fixed point.
A spherical ball of radius $\frac{\pi}{2}$ is called a {\it hemisphere}. 
So it is the common part of $S^d$ and a closed half-space of $E^{d+1}$.
We denote by $H(m)$ the hemisphere with center $m$.
Two hemispheres with centers at a pair of antipodes are called {\it opposite}.

A {\it spherical $(d-1)$-dimensional ball of radius $\rho \in (0, {\frac{\pi}{2}}]$} is the set of points of a $(d-1)$-dimensional great sphere of $S^d$ which are at distances at most $\rho$ from a fixed point.
We call it the {\it center} of this ball.  
The $(d-1)$-dimensional balls of radius $\frac{\pi}{2}$ are called {\it $(d-1)$-dimensional hemispheres}, and {\it semicircles} for $d=2$.

A set $C \subset S^d$ is said to be {\it convex} if no pair of antipodes belongs to $C$ and if for every $a, b \in C$ we have $ab \subset C$.  
A closed convex set on $S^d$ with non-empty interior is called a {\it convex body}. 
Some  basic references on convex bodies and their properties are \cite{Ha}, \cite{Le} and \cite{Sa}.
A short survey of other definitions of convexity on $S^d$ is given in Section 9.1 of \cite{DGK}.

Since the intersection of every family of convex sets is also convex, for every set $A \subset S^d$ contained in an open hemisphere of $S^d$ there is the smallest convex set ${\rm conv} (A)$ containing $Q$. 
We call it {\it the convex hull of} $A$.

Let $C \subset S^d$ be a convex body. 
Let $Q \subset S^d$ be a convex body or a hemisphere.  
We say that $C$ {\it touches $Q$ from inside} if $C \subset Q$ and ${\rm bd} (C) \cap {\rm bd} (Q) \not = \emptyset$. 
We say that $C$ {\it touches $Q$ from outside} if $C \cap Q \not = \emptyset$ and ${\rm int} (C) \cap {\rm int} (Q) = \emptyset$. 
In both cases, points of ${\rm bd} (C) \cap {\rm bd} (Q)$ are called {\it points of touching}.
In the first case, if $Q$ is a hemisphere, we also say that $Q$ {\it supports} $C$, or {\it supports $C$ at $t$}, provided $t$ is a point of touching.
If at every boundary point of $C$ exactly one hemisphere supports $C$, we say that $C$ is {\it smooth}.

If hemispheres $G$ and $H$ of $S^d$ are different and not opposite, then $L = G \cap H$ is called {\it a lune} of $S^d$. 
This notion is considered in many books and papers (for instance, see \cite{VB}). 
The $(d-1)$-dimensional hemispheres bounding $L$ and contained in $G$ and $H$, respectively, are denoted by $G/H$  and $H/G$.

Observe that $(G/H) \cup (H/G)$ is the boundary of the lune $G \cap H$.  
Denote by $c_{G/H}$ and $c_{H/G}$ the centers of $G/H$ and $H/G$, respectively.
By {\it corners} of the lune $G \cap H$ we mean points of the set $(G/H) \cap (H/G)$. 
In particular, every lune on $S^2$ has two corners. 
They are antipodes. 

We define the {\it thickness $\Delta (L)$ of a lune} $L = G \cap H$ on $S^d$ as the spherical distance of the centers of the $(d-1)$-dimensional hemispheres $G/H$ and $H/G$ bounding $L$.
Clearly, it is equal to each of the non-oriented angles $\angle c_{G/H}rc_{H/G}$, where $r$ is any corner of $L$. 

Compactness arguments show that for any hemisphere $K$ supporting a convex body $C \subset S^d$ there is at least one hemisphere $K^*$ supporting $C$ such that the lune $K \cap K^*$ is of the minimum thickness.
In other words, there is a ``narrowest" lune of the form $K \cap K'$ over all hemispheres $K'$ supporting $C$. 
The thickness of the lune $K \cap K^*$ is called {\it the width of $C$ determined by $K$.} 
We denote it by ${\rm width}_K (C)$. 

We define the {\it thickness} $\Delta (C)$ of a spherical convex body $C$ as the smallest width of $C$. 
This definition is analogous to the classical definition of thickness (called also minimal width) of a convex body in Euclidean space.

By {\it the relative interior} of a convex set $C \subset S^d$ we mean the interior of $C$ with respect to the smallest sphere $S^k \subset S^d$ that contains $C$.

\vskip0.5cm 
\section{A few lemmas on spherical convex bodies}

\begin{lem}\label{intersection}
Let $A$ be a closed set contained in an open hemisphere of $S^d$. Then ${\rm conv} (A)$ coincides with the intersection of all hemispheres containing $A$.
\end{lem}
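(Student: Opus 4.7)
The plan is to prove both inclusions separately, handling the reverse inclusion by transporting the problem to the ambient Euclidean space $E^{d+1}$ and applying a standard separation theorem.

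Let $I(A)$ denote the intersection of all hemispheres of $S^d$ containing $A$. For the inclusion ${\rm conv}(A) \subseteq I(A)$, my approach is to use the correspondence between hemispheres of $S^d$ and closed half-spaces of $E^{d+1}$ bounded by a hyperplane through the origin. Let $C(A) = \{\sum_i \lambda_i a_i : \lambda_i \geq 0,\ a_i \in A\}$ be the Euclidean positive hull of $A$. Since $A$ lies in an open hemisphere, there is a unit vector $v$ with $\langle a, v\rangle > 0$ for every $a \in A$, which forces $C(A) \setminus \{0\}$ into an open half-space of $E^{d+1}$, so in particular $C(A) \cap S^d$ contains no pair of antipodes. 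Because the spherical arc joining two non-antipodal points is exactly the radial projection of the Euclidean segment joining them onto $S^d$, the set $C(A) \cap S^d$ is spherically convex in the sense of the paper, contains $A$, and hence contains ${\rm conv}(A)$. Every hemisphere $H$ containing $A$ corresponds to a half-space through the origin containing $A$, hence containing the cone $C(A)$, hence containing $C(A) \cap S^d \supseteq {\rm conv}(A)$. A short induction based on Carath\'eodory's theorem in $E^{d+1}$ also shows the reverse, so that in fact ${\rm conv}(A) = C(A) \cap S^d$.

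For the reverse inclusion $I(A) \subseteq {\rm conv}(A)$, I would take $x \in S^d \setminus {\rm conv}(A)$ and produce a hemisphere containing $A$ but not $x$. Because $A$ is closed in the compact sphere $S^d$ it is compact, and because $A$ lies in an open hemisphere we have $0 \notin {\rm conv}_E(A)$; these two facts imply that $C(A)$ is a closed convex cone in $E^{d+1}$. Since $x \notin {\rm conv}(A) = C(A) \cap S^d$ and $x \in S^d$, we have $x \notin C(A)$, so the Hahn--Banach separation theorem furnishes a hyperplane through the origin strictly separating $\{x\}$ from $C(A)$. Intersecting the corresponding closed half-space with $S^d$ yields a hemisphere containing $A$ but excluding $x$, so $x \notin I(A)$, completing the proof.

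The step I expect to require the most care is the identification ${\rm conv}(A) = C(A) \cap S^d$, since one must verify both that $C(A) \cap S^d$ fits the paper's definition of convexity (no antipodes and closed under arcs) and that every radial projection of a positive Euclidean combination of points of $A$ arises from iterated spherical arc operations inside ${\rm conv}(A)$. The closedness of $C(A)$, which is what makes strict Hahn--Banach separation available, is also where the hypothesis that $A$ lies in an \emph{open} hemisphere, rather than merely in a closed one, enters in an essential way.
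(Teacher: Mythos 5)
Your proof is correct, but it takes a genuinely different route from the paper's. The paper proves the easy inclusion by intersecting a given hemisphere $H$ with the open hemisphere $J$ containing $A$ (so as to have a convex set in the spherical sense, since a hemisphere itself contains antipodes), and proves the hard inclusion by citing closedness of ${\rm conv}(A)$ (Lemma 1 of [L2]), surrounding a point $x \notin {\rm conv}(A)$ by a small ball disjoint from ${\rm conv}(A)$, and invoking a spherical separation lemma (two disjoint convex sets lie in opposite hemispheres, itself a consequence of separation of convex cones) to find a hemisphere containing ${\rm conv}(A)$ but not $x$. You instead work directly with the positive hull $C(A)$ in $E^{d+1}$: you get the easy inclusion from the spherical convexity of $C(A)\cap S^d$ and the correspondence between hemispheres and half-spaces through the origin, and the hard inclusion from the closedness of $C(A)$ (which uses compactness of $A$ and $0 \notin {\rm conv}_E(A)$, i.e. exactly the open-hemisphere hypothesis) together with point-versus-closed-cone separation; the identification ${\rm conv}(A) = C(A)\cap S^d$, which you correctly flag as the delicate step and justify via conic Carath\'eodory plus induction on the number of points, is what lets you conclude $x \notin C(A)$ from $x \notin {\rm conv}(A)$. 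Your approach is more self-contained (no appeal to the lemmas of [L2] on closedness of spherical hulls or spherical separation) and yields the useful by-product ${\rm conv}(A) = C(A)\cap S^d$, at the cost of having to prove closedness of the conical hull and the Carath\'eodory induction; the paper's route is shorter given its cited machinery and avoids both by fattening $x$ to a ball. One small imprecision: since $0 \in C(A)$ lies on every hyperplane through the origin, the separation you get is strict only on the side of $x$ (the cone lies in the complementary closed half-space), but that weak form is exactly what is needed, as the resulting hemisphere is closed and still excludes $x$.
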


\begin{proof}
First, let us show that ${\rm conv} (A)$ is contained in the intersection of all hemispheres containing $A$.
Take any hemisphere $H$ containing $A$ and denote by $J$ the open hemisphere from the formulation of our lemma.
Recall that $A\subset J$ and $A\subset H$.
Thus since $J\cap H$ is a convex set,
%Since $A\subset J$, $A\subset H$, and $J\cap H$ is a convex set, we
we obtain ${\rm conv} (A) \subset {\rm conv} (J\cap H) = J\cap H\subset H$.
Thus, since ${\rm conv} (A)$ is contained in any hemisphere 
that contains $A$, also ${\rm conv} (A)$ is a subset of the intersection of all those hemispheres.

Now we intend to show that the intersection of all hemispheres containing $A$ is contained in ${\rm conv} (A)$.
Assume the opposite, i.e., that there is a point  $x \notin {\rm conv} (A)$ which belongs to every hemisphere containing $A$. 
Since $A$ is closed, by Lemma 1 of \cite{L2} the set ${\rm conv} (A)$ is also closed. 
Hence there is an $\varepsilon >0$ such that $B_\varepsilon (x) \cap {\rm conv} (A) = \emptyset$.
Since these two sets are convex, we may apply the following more general version of Lemma 2 of \cite{L2}: {\it any two convex disjoint sets on $S^d$ are subsets of two opposite hemispheres} (which is true again by the separation theorem for convex cones in $E^{d+1}$). 
So $B_\varepsilon (x)$ and ${\rm conv} (A)$ are in some two opposite hemispheres.
Hence $x$ does not belong to this of them which contains ${\rm conv} (A)$. 
Clearly, that one contains also $A$.
This contradicts our assumption on the choice of $x$, and thus the proof is finished.
\end{proof}

We omit a simple proof of the next lemma, which is analogous to the situation in $E^d$ and needed a few times later.
Here our hemisphere plays the role of a closed half-space there.

\begin{lem}\label{support} Let $C$ be a spherical convex body. 
Assume that a hemisphere $H$ supports $C$ at a point $p$ of the relative interior of a convex set $T\subset C$. 
Then $T \subset {\rm bd} (H)$.\end{lem}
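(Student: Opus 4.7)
The plan is to argue by contradiction and reduce the situation to a one-dimensional picture inside a suitable great sphere. Suppose $T \not\subset \mathrm{bd}(H)$. Since $T \subset C \subset H$, there must exist some $q \in T$ that lies in the interior of $H$ (the open hemisphere). The goal is to use the hypothesis that $p$ is relatively interior in $T$ to extend the arc from $q$ through $p$ a little further and produce a point of $T$ lying outside $H$, contradicting $T \subset H$.

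First I would fix $S^k$, the smallest great sphere of $S^d$ containing $T$, so that by definition $p$ lies in the interior of $T$ taken inside $S^k$. Then I would consider the great circle $\Gamma$ through $q$ and $p$. Since $q$ and $p$ both lie in $T \subset S^k$ and $S^k$ is the intersection of $S^d$ with a linear subspace of $E^{d+1}$, the $2$-plane spanned by $q$, $p$, and the origin is contained in that subspace, so $\Gamma \subset S^k$. Consequently, a sufficiently small $S^k$-neighborhood of $p$ inside $T$ meets $\Gamma$ in a genuine arc on both sides of $p$, and one may pick a point $p' \in T$ on $\Gamma$ which is strictly past $p$ on the side opposite to $q$.

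Next I would analyze how $\Gamma$ sits with respect to $H$. Because $q \in \mathrm{int}(H)$ but $p \in \mathrm{bd}(H)$, $\Gamma$ is not the bounding great sphere of $H$, so $\Gamma \cap \mathrm{bd}(H)$ is exactly the antipodal pair $\{p,-p\}$; these two points split $\Gamma$ into two open semicircles, one lying in $\mathrm{int}(H)$ and the other in the complement of $H$. The point $q$ lies in the first semicircle, and so by the choice of $p'$ on the opposite side of $p$ from $q$, the point $p'$ lies on the semicircle outside $H$. Hence $p' \in T$ but $p' \notin H$, contradicting $T \subset C \subset H$.

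The only delicate step is the dimensional bookkeeping in the second paragraph: one must verify that the great circle $\Gamma$ actually lies in $S^k$, so that the $S^k$-relative interior hypothesis yields a neighborhood of $p$ in $T$ containing points of $\Gamma$ past $p$. Once this is recorded, the remainder is the same one-dimensional argument as in the Euclidean half-space case.
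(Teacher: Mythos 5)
Your proof is correct: the paper omits this argument entirely, describing it only as ``analogous to the situation in $E^d$'' with the hemisphere playing the role of a closed half-space, and your contradiction --- taking $q\in T\cap\mathrm{int}(H)$, noting the great circle $\Gamma$ through $p$ and $q$ lies in the carrier sphere $S^k$, and using the relative-interior neighborhood of $p$ to find a point of $T$ on $\Gamma$ just past $p$, hence in the open semicircle outside $H$ --- is exactly that standard Euclidean argument transplanted to the sphere. The only small point worth recording is that $q\neq -p$, so that $\Gamma$ is well defined; this is automatic because the convex body $C$ contains no pair of antipodes.
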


\begin{lem}\label{distance} 
Let $K, M$ be hemispheres such that the lune $K\cap M$ is of thickness smaller than $\frac{\pi}{2}$.
 Denote by $b$ the center of $M/K$.
Every point of $K \cap M$ at distance $\frac{\pi}{2}$ from $b$ is a corner of $K \cap M$.
\end{lem}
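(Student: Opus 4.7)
My plan is to reduce the statement to a short inner-product computation inside the two-dimensional subspace $V = \mathrm{span}(b, m) \subset E^{d+1}$, where $m$ denotes the center of $M$. The setup rests on the observation that since $b$ is the center of the $(d-1)$-hemisphere $M/K \subset \partial M$, we have $b \cdot m = 0$, so $\{b, m\}$ is an orthonormal pair. The corners of the lune $K \cap M$ coincide with $\partial K \cap \partial M$, a great $(d-2)$-sphere lying in $V^{\perp}$. I would then argue that both $c_{K/M}$ and the pole $k$ of $K$ must lie in $V$: $k$ is orthogonal to every corner (since corners lie on $\partial K$), and $c_{K/M}$ is orthogonal to them as well, being at spherical distance $\frac{\pi}{2}$ from each corner (the corners form the boundary of the $(d-1)$-hemisphere $K/M$ of which $c_{K/M}$ is the center).

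Next I would pin down the coordinates of $c_{K/M}$ and $k$ within $V$. Using $c_{K/M} \cdot b = \cos\Delta$ (the thickness hypothesis with $\Delta = \Delta(K \cap M)$), unit length, and $c_{K/M} \in M$, I get $c_{K/M} = (\cos\Delta)\, b + (\sin\Delta)\, m$. Then $k$, being a unit vector in $V$ orthogonal to $c_{K/M}$ with $k \cdot b \geq 0$ (the latter forced by $b \in K$), must equal $(\sin\Delta)\, b - (\cos\Delta)\, m$.

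The conclusion then falls out quickly. For any $p \in K \cap M$ with $|pb| = \frac{\pi}{2}$ we have $p \cdot b = 0$, so
\[
p \cdot k = (\sin\Delta)(p \cdot b) - (\cos\Delta)(p \cdot m) = -(\cos\Delta)(p \cdot m).
\]
The conditions $p \in K$ and $p \in M$ translate to $p \cdot k \geq 0$ and $p \cdot m \geq 0$, and since $\Delta < \frac{\pi}{2}$ makes $\cos\Delta > 0$, both inequalities force $p \cdot m = 0$ and $p \cdot k = 0$. Thus $p \in \partial K \cap \partial M$, that is, $p$ is a corner. The step that will require the most care is verifying cleanly that $c_{K/M}$ and $k$ lie in $V$; once that observation is in place the computation is essentially routine, and the hypothesis $\Delta < \frac{\pi}{2}$ enters only at the very last step, to guarantee $\cos\Delta \neq 0$.
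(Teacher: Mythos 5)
Your proof is correct, but it takes a genuinely different route from the paper's. You linearize the problem in $E^{d+1}$: with $m,k$ the poles of $M,K$, you use that $b$, as the center of $M/K\subset\mathrm{bd}(M)$, satisfies $b\cdot m=0$, that $k$ and $c_{K/M}$ lie in $V=\mathrm{span}(b,m)$ (being orthogonal to the great $(d-2)$-sphere of corners, which spans $V^{\perp}$), and hence $k=\sin\Delta\,b-\cos\Delta\,m$ where $\Delta=\Delta(K\cap M)$; then for $p\in K\cap M$ with $p\cdot b=0$, the constraints $p\cdot k\ge 0$ and $p\cdot m\ge 0$ combined with $\cos\Delta>0$ force $p\cdot k=p\cdot m=0$, so $p$ is a corner. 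The only delicate points are the sign choices ($c_{K/M}\cdot m\ge 0$ since $c_{K/M}\in M$, and $k\cdot b\ge 0$ since $b\in K$) and the membership $k, c_{K/M}\in V$, and you handle both; note that $k\in V$ comes even more cheaply from the fact that $b$ is the normalized projection of $k$ onto $m^{\perp}$, and that corners are orthogonal to $b$ because they form the relative boundary of $M/K$, whose center is $b$. The paper argues synthetically instead: it shows every point $p$ of the lune lies on an arc $xy$ with $x$ a corner and $y$ on the arc $ab$ joining the centers of the two bounding hemispheres; since $|by|\le|ab|<\frac{\pi}{2}$ places $y$ in the interior of $H(b)$ while $x\in\mathrm{bd}(H(b))$, the only point of such an arc at distance $\frac{\pi}{2}$ from $b$ is the corner $x$. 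Your computation avoids the paper's somewhat informal step that $p,q$ lie in the triangle $abx$ and makes completely explicit where the hypothesis $\Delta<\frac{\pi}{2}$ enters (only through $\cos\Delta\neq 0$), while the paper's argument stays within intrinsic spherical geometry and needs no coordinates.
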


\begin{proof}
Denote the center of $K/M$ by $a$. 
Take any point $p\in K\cap M$.
Let us show that there are points $x \in (K/M) \cap( M/K)$ and $y \in ab$ such that $p \in xy$. 
 
If $p=b$ then it is obvious.
Otherwise there is a unique point $q \in K/M$ such that $p \in bq$. 
Moreover, there exists $x \in (K/M) \cap( M/K)$ such that $q \in ax$. 
The reader can easily show that points $p, q$ belong to the triangle $abx$ and thus observe that there exists $y \in ab$ such that $p \in xy$, which confirms the statement from the first paragraph of the proof.

We have $|by|\le |ba|< \frac{\pi}{2}$. 
The inequality $|by| < \frac{\pi}{2}$ means that $y$ is in the interior of $H(b)$. 
Of course, $|bx| = \frac{\pi}{2}$, which means that $x \in {\rm bd} (H(b))$. 
From the two preceding sentences we conclude that $xy$ is a subset of $H(b)$ with $x$ being its only point on ${\rm bd} (H(b))$.
Thus, if $|pb|=\frac{\pi}{2}$, we conclude that $p \in {\rm bd} (H(b))$, and consequently $p=x$ which implies that $p$ is a corner of $K\cap M$.
The last sentence means that the thesis of our lemma holds true.
\end{proof}

\begin{lem}\label{convexhull}
Let $o \in S^d$ and $0 < \mu < \frac{\pi}{2}$.
For every $x \in S^d$ at distance $\frac{\pi}{2}$ from $o$ denote by $x'$ the point of the arc $ox$ at distance $\mu$ from $x$. 
Consider two points $x_1,x_2$ at distance $\frac{\pi}{2}$ from $o$ such that $|x_1x_2| < \pi - \mu$. Then for every $x \in x_1x_2$ we have 

$$B_\mu (x')  \subset {\rm conv} (B_\mu(x_1') \cup B_\mu(x_2')).$$
\end{lem}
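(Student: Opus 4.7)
The plan is to invoke Lemma \ref{intersection}: if the closed set $A = B_\mu(x_1') \cup B_\mu(x_2')$ lies in an open hemisphere, then $\mathrm{conv}(A)$ coincides with the intersection of all hemispheres containing $A$, so it suffices to prove the dual statement that every hemisphere $H(m)$ containing $A$ also contains $B_\mu(x')$. To verify the open-hemisphere hypothesis, I would exhibit a hemisphere $H(m_\varepsilon)$ whose interior contains $A$, taking $m_\varepsilon$ to be a small perturbation of $o$ in the direction of the midpoint of the arc $x_1 x_2$; the assumption $|x_1 x_2| < \pi - \mu$ (in particular, $|x_1 x_2|<\pi$) makes this perturbation work, since the midpoint then has strictly positive inner product with each $x_i$.

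Because $|o x_i| = \pi/2$, the vectors $o$ and $x_i$ are orthogonal in $E^{d+1}$, so the point $x_i'$ on the arc $o x_i$ at distance $\mu$ from $x_i$ admits the coordinate expression $x_i' = \cos\mu \cdot x_i + \sin\mu \cdot o$ (and identically $x' = \cos\mu \cdot x + \sin\mu \cdot o$ for the relevant $x$ on $x_1 x_2$). A standard equivalence on $S^d$ says $B_\mu(x_i') \subset H(m)$ iff $m \cdot x_i' \ge \sin\mu$, which after substitution becomes
\[
\cos\mu \cdot (m \cdot x_i) \;\ge\; \sin\mu \cdot (1 - m \cdot o).
\]
Since $\cos\mu > 0$ and $1 - m \cdot o \ge 0$, setting $c := \tan\mu \cdot (1 - m \cdot o) \ge 0$ the hypothesis becomes $m \cdot x_i \ge c$ for $i = 1, 2$, while the desired conclusion $B_\mu(x') \subset H(m)$ becomes $m \cdot x \ge c$.

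The set $\{y \in S^d : m \cdot y \ge c\}$ is the closed spherical ball of radius $\arccos c \le \pi/2$ centered at $m$, and hence is convex. Since $x_1$ and $x_2$ belong to it and the arc $x_1 x_2$ is the geodesic joining them (well-defined because $|x_1 x_2| < \pi$), the whole arc lies in this ball, yielding $m \cdot x \ge c$ for every $x \in x_1 x_2$. Reversing the chain of equivalences then gives $m \cdot x' \ge \sin\mu$, i.e., $B_\mu(x') \subset H(m)$.

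The main obstacle I anticipate is the preliminary check that $A$ is contained in an open hemisphere, so that Lemma \ref{intersection} applies legitimately. Once this is in place, the argument reduces to the observation that $x'$ depends affinely on $x$ through the formula $x' = \cos\mu \cdot x + \sin\mu \cdot o$, so the spherical inclusion to be proved is equivalent to a linear inequality in $m \cdot x$ whose validity transfers from the endpoints $x_1, x_2$ to the entire arc by the convexity of a spherical cap of radius at most $\pi/2$.
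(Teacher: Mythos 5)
Your proof is correct, and it follows the paper's overall frame---both arguments reduce, via Lemma \ref{intersection} and the criterion $B_\mu(c)\subset H(m)\Leftrightarrow |cm|\le\frac{\pi}{2}-\mu$, to the dual implication that $|x_1'm|\le\frac{\pi}{2}-\mu$ and $|x_2'm|\le\frac{\pi}{2}-\mu$ force $|x'm|\le\frac{\pi}{2}-\mu$---but you establish that implication by a genuinely different route. The paper argues synthetically: it passes to the two-dimensional sphere through $x_1',x_2',m$, introduces the reflected point $\overline{o}$ and the intersection point $g$ of the arcs $xo$ and $x_1'x_2'$, and chases triangle inequalities to get $|gm|\le |og|$, hence $|x'm|\le |x'g|+|gm|\le |x'o|$. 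You instead use the coordinate formula $x'=\cos\mu\,x+\sin\mu\,o$ (valid since $o$ is orthogonal to every point of the arc $x_1x_2$ in $E^{d+1}$), which turns hypotheses and conclusion into the single linear condition $m\cdot y\ge c$ with $c=\tan\mu\,(1-m\cdot o)\ge 0$, transferred from the endpoints to the whole arc by the arc-containment property of the cap $\{y\in S^d: m\cdot y\ge c\}$. This is shorter, avoids the delicate dissection step of the paper, in fact only needs $|x_1x_2|<\pi$ rather than the full hypothesis $|x_1x_2|<\pi-\mu$, and you also make explicit the open-hemisphere check required before invoking Lemma \ref{intersection}, which the paper leaves implicit. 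Two small points to tidy up: when $c=0$ the cap is a whole hemisphere, which is not convex in the paper's strict sense (it contains antipodes), so rather than citing convexity, verify the needed containment directly---a point of the shorter arc is a nonnegative combination of $x_1,x_2$ (they are not antipodal), so $m\cdot x\ge c\cdot\frac{\sin((1-t)\theta)+\sin(t\theta)}{\sin\theta}\ge c$; and the perturbation argument should be written out, e.g.\ taking $m_\varepsilon$ on the arc from $o$ toward the midpoint $z$ of $x_1x_2$ one computes $m_\varepsilon\cdot x_i'=\cos\varepsilon\,\sin\mu+\sin\varepsilon\,\cos\mu\,\cos\frac{|x_1x_2|}{2}>\sin\mu$ for small $\varepsilon>0$, so both balls lie in the open hemisphere centered at $m_\varepsilon$.
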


\begin{proof}
Let $o,m$ be points of $S^d$ and $\rho$ be a positive number less than $\frac{\pi}{2}$.
Let us show that

$$B_\rho (o)\subset H(m) \ \ {\rm if \ and \ only \ if} \ \ |om|\le \frac{\pi}{2}- \rho. \eqno (1)$$

First assume that $B_\rho (o)\subset H(m)$. 
Let $b$ be the boundary point of $B_\rho (o)$ such that $o \in mb$.
We have: $|om|=|bm|-|ob| = |bm| - \rho \le \frac{\pi}{2}- \rho$, which confirms the ``only if" part of (*).
Assume now that $|om|\le \frac{\pi}{2}-\rho$.
Let $b$ be any point of $B_\rho (o)$.
We have: $|bm|\le |bo|+|om|\le \rho + \left( \frac{\pi}{2}- \rho \right) = \frac{\pi}{2}$.
Therefore every point of $B_\rho (o)$ is at a distance at most $\frac{\pi}{2}$ from $m$.
Hence $B\subset H(m)$, which confirms the ``if" part of (1).
So (1) is shown.

Lemma 1 of \cite{L2} guarantees that $Y={\rm conv} (B_\mu(x_1') \cup B_\mu(x_2'))$ is a closed set as convex hull of a closed set. 
Consequently, from Lemma \ref{intersection} we see that $Y$ is the intersection of all hemispheres containing $Y$.
Moreover, observe that an arbitrary hemisphere contains a set if and only if it contains the convex hull of it.
Hence $Y$ is the intersection of all hemispheres containing $B_\mu(x_1') \cup B_\mu(x_2')$.

As a result of the preceding paragraph, in order to prove the statement of our lemma 
it is sufficient to show that every hemisphere $H(m)$ containing $B_\mu(x_1') \cup B_\mu(x_2')$ contains also $B_\mu (x')$.
Thus, having (1) in mind we see that in order to verify this it is sufficient to show that for any $m \in S^d$ 

$$|x_1'm|\le \frac{\pi}{2}-\mu \ \ {\rm and} \ \ |x_2'm|\le \frac{\pi}{2}-\mu \ \ {\rm imply} \ \ |x'm|\le \frac{\pi}{2}- \mu. \eqno (2)$$

Let us assume the first two of these inequalities and show the third one.

Observe that $x,x_1'$ and $x_2'$ belong to the spherical triangle $x_1x_2o$.
Therefore the arcs $xo$ and $x_1'x_2'$ intersect.
Denote the point of intersection by $g$. 

In this paragraph we consider the intersection of $S^d$ with the three-dimensional subspace of $E^{d+1}$ containing $x_1', x_2', m$. 
Observe that this intersection is a two-dimensional sphere concentric with $S^d$. 
Denote this sphere by $S^2$. 
Denote by $\overline{o}$ the other unique point on $S^2$ such that the triangles $x_1'x_2'o$ and $x_1'x_2'\overline{o}$ are congruent.
By the first two inequalities of (2) we obtain $m\in B_{\frac{\pi}{2}- \mu}(x_1') \cap B_{\frac{\pi}{2}- \mu}(x_2')$.
Observe that $g\overline{o} \cup go$ dissects $B_{\frac{\pi}{2}- \mu}(x_1') \cap B_{\frac{\pi}{2}- \mu}(x_2')$ into two parts such that $x_1'$ belongs to one of them and $x_2'$ belongs to the other.
Therefore at least one of the arcs $x_1'm$ and $x_2'm$, say $x_1'm$ intersects $g\overline{o}$ or $go$, say $go$.
Denote this point of the intersection by $s$.
Taking the first assumption of (2) into account and using two times the triangle inequality we obtain $|og |= \left( |os|+|x_1's|\right) - |x_1's| + |sg|\ge |ox_1'| - |x_1's| + |sg|= \frac{\pi}{2}- \mu  - |x_1's| + |sg|\ge |x_1'm|  - |x_1's| + |sg|=|sm|+|sg|\ge |gm|$.

Applying the just obtained inequality and looking now again on the whole $S^d$ we get $|x'm|\le |x'g|+|gm|\le |x'g|+ |og| = |x'o| = \frac{\pi}{2}- \mu$ which is the required inequality in (2). 
Thus by (2) also our lemma holds true.
\end{proof}

\begin{lem}\label{extreme} 
Let $C\subset S^d$ be a convex body. Every point of $C$ belongs to the convex hull of at most $d+1$ extreme points of $C$.
\end{lem}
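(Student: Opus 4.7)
The plan is to transfer the statement from $S^d$ to $E^d$ by the central (gnomonic) projection and then invoke the classical theorems of Carath\'eodory and Minkowski, which are the Euclidean analogues of what we want.

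First, since $C$ is a spherical convex body it lies in some open hemisphere; I would fix such an open hemisphere $\mathrm{int}(H(m))$ and let $T$ be the hyperplane of $E^{d+1}$ tangent to $S^d$ at $m$, identified with $E^d$. Let $\pi : \mathrm{int}(H(m)) \to T$ denote the central projection from the origin of $E^{d+1}$.

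The key step is to verify that $\pi$ is a bijection and that it maps every great-circle arc in $\mathrm{int}(H(m))$ onto a line segment in $T$. This will follow because every great-circle arc lies in a $2$-dimensional linear subspace of $E^{d+1}$, whose intersection with the affine hyperplane $T$ is an affine line onto which $\pi$ sends the arc bijectively. Consequently $\pi$ will send spherical convex sets to Euclidean convex sets, $\pi(C)$ will be a compact convex body in $E^d$, and a point $p \in C$ will be extreme in $C$ if and only if $\pi(p)$ is extreme in $\pi(C)$.

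With this dictionary in place, I would apply the classical Minkowski theorem (every compact convex body in $E^d$ is the convex hull of its extreme points) together with the classical Carath\'eodory theorem (every point of the convex hull of a set $A \subset E^d$ is a convex combination of at most $d+1$ points of $A$) to conclude that every point of $\pi(C)$ is a convex combination of at most $d+1$ extreme points of $\pi(C)$. Pulling back through $\pi^{-1}$ then yields the statement for $C$. The main obstacle --- really the only thing beyond cited theorems --- is the verification that the gnomonic projection preserves the notions of \emph{convex set}, \emph{convex hull}, and \emph{extreme point} in the senses used here; this is intuitively clear but must be confirmed carefully, and it is where all the spherical content of the lemma is concentrated.
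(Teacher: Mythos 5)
Your proposal is correct, but it takes a genuinely different route from the paper. The paper argues entirely inside spherical geometry, by induction on $d$: given $x\in C$, it picks an extreme point $e$ and a boundary point $f$ with $x\in ef$; if $f$ is not extreme it takes a hemisphere $K$ supporting $C$ at $f$, passes to $C'=\mathrm{bd}(K)\cap C$, which is a convex body in a lower-dimensional great sphere whose extreme points are extreme points of $C$, and applies the inductive hypothesis to write $f$ in the hull of at most $d$ extreme points, hence $x$ in the hull of at most $d+1$. Your argument instead transfers everything to $E^d$ by the gnomonic projection and quotes Minkowski's theorem together with Carath\'eodory's theorem; the dictionary you describe (arcs map bijectively to segments because each great circle spans a $2$-dimensional linear subspace meeting the tangent hyperplane in a line, hence convex sets, convex hulls and extreme points correspond) is correct and does all the work, and the pullback step is justified since $\pi(\mathrm{conv}(F))=\mathrm{conv}(\pi(F))$ for finite $F$. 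The one preliminary you assert without comment --- that a convex body $C\subset S^d$ lies in an \emph{open} hemisphere --- does need a word: it follows because the cone over $C$ in $E^{d+1}$ is a closed pointed convex cone (pointedness comes from the exclusion of antipodal pairs in the definition of convexity), so it admits a strictly separating hyperplane through the origin; this is the same separation machinery the paper invokes in Lemma~\ref{intersection}. What each approach buys: yours is shorter and imports well-known Euclidean results, at the cost of setting up and verifying the projection dictionary and the open-hemisphere containment; the paper's induction is self-contained in the spherical setting and uses only the supporting-hemisphere tools already developed there, which is in keeping with the rest of the paper.
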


\begin{proof}
We apply induction with respect to $d$.
For $d=1$ the thesis is trivial since every convex body on $S^1$ is a spherical arc. 
Let $d \ge 2$ be a fixed integer.
Assume that for each $k=1,2, \dots , d-1$ every boundary point of a spherical convex body $\widehat{C}\subset S^k$ belongs to convex hull of at most $k+1$  extreme points of $\widehat{C}$. 

Let $x$ be a point of $C$. 
Take an extreme point $e$ of $C$. 
If $x$ is not a boundary point of $C$, take the boundary point $f$ of $C$ such that $x\in ef$.
In the opposite case put $f=x$. 

If $f$ is an extreme point of $C$, the thesis follows immediately.
In the opposite case take a hemisphere $K$ supporting $C$ at $f$. 
Put $C'=\textrm{bd} (K)\cap C$.
Observe that every extreme point of $C'$ is also an extreme point of $C$.
Let $Q$ be the intersection of the smallest linear subspace of $E^{d+1}$ containing $C'$ with $S^d$. 
Clearly, $Q$ is isomorphic to $S^k$ for a $k<d$.
Moreover, $C'$ has non-empty relative interior with respect to $Q$, because otherwise there would exist a smaller linear subspace of $E^{d+1}$ containing $C'$. 
Thus, by the inductive assumption we obtain that $f$ is in the convex hull of a set $F$ of at most $d$ extreme points of $C$.
Therefore $x\in \textrm{conv} (\{e\} \cup F)$ which means that $x$ belongs to the convex hull of $d+1$ extreme points of $C$. 
This finishes the inductive proof.
\end{proof}

The proof of the following $d$-dimensional lemma is analogous to the two-dimensional Lemma 4.1 from \cite{LaMu} shown there for wider class of reduced spherical convex bodies. 

\begin{lem}\label{l_smooth} 
 Let $C\subset S^d$ be a spherical convex body with $\Delta (C) > \frac{\pi}{2}$ and let $L \supset C$ be a lune such that $\Delta (L) = \Delta (C)$.
Each of the centers of the $(d-1)$-dimensional hemispheres bounding $L$ belongs to the boundary of $C$ and both are smooth points of the boundary of $C$.
\end{lem}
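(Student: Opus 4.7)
My plan is, by symmetry, to prove both assertions for the center $a$ of $K/M$, where $L=K\cap M$; the analogous statements for the center $b$ of $M/K$ then follow by interchanging $K$ and $M$. Let $m_K,m_M\in S^d$ denote the centers of the hemispheres $K,M$ and put $\theta=|m_Km_M|$, so that $\Delta(L)=\pi-\theta$ and the hypothesis $\Delta(C)>\frac{\pi}{2}$ becomes $\theta<\frac{\pi}{2}$. A direct computation gives $a=(m_M-\cos\theta\,m_K)/\sin\theta$, so that $a\cdot m_K=0$ and $a\cdot m_M=\sin\theta>0$; in particular $a\in{\rm bd}(K)\cap{\rm int}(M)$. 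As a preliminary, I would record via the standard rotation trick that for every lune $L'\supset C$, $\Delta(L')\ge\Delta(C)$, with equality forcing both bounding hemispheres of $L'$ to support $C$: if a bounding hemisphere of $L'$ fails to support $C$, rotating it about the corners of $L'$ in the direction that increases the angular distance between the two centers yields a strictly thinner lune still containing $C$. In particular, both $K$ and $M$ support $C$.

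To prove $a\in{\rm bd}(C)$, since $a\in{\rm bd}(K)$ and $C\subset K$, it suffices to show $a\in C$. Suppose for contradiction $a\notin C$. Standard spherical separation yields a hemisphere $H$ supporting $C$ with $a\notin H$ (for instance, the hemisphere at distance $\frac{\pi}{2}$ from a point of $C$ nearest to $a$, perpendicular to that geodesic). Decompose $m_H=\alpha m_K+\beta a+u$ with $u\perp{\rm span}(m_K,a)$, so that $\alpha=m_K\cdot m_H\le 1$ and $\beta=a\cdot m_H<0$. Since $m_M=\cos\theta\,m_K+\sin\theta\,a$, a short calculation yields
$$m_H\cdot m_M-\cos\theta=(\alpha-1)\cos\theta+\beta\sin\theta<0,$$
using that $\cos\theta,\sin\theta>0$ while $\alpha-1\le 0$ and $\beta<0$. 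Hence $|m_Hm_M|>\theta$ and $\Delta(H\cap M)<\Delta(L)$. One checks $H\cap M$ is a proper lune ($H\ne M$ since $a\in M\setminus H$, and $H\ne -M$ since otherwise $C\subset{\rm bd}(M)$ would have empty interior). Because $H$ and $M$ both support $C$, the preliminary forces $\Delta(H\cap M)\ge\Delta(C)=\Delta(L)$, a contradiction.

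For smoothness at $a$, the hemisphere $K$ supports $C$ at $a$. If $K'\ne K$ also supports $C$ at $a$, then $a\cdot m_{K'}=0$ forces $\beta=0$ in the same decomposition, so $m_{K'}=\alpha m_K+u$ with $u\perp{\rm span}(m_K,a)$ and $\alpha\in[-1,1]$. The cases $\alpha=1$ (forcing $K'=K$) and $\alpha=-1$ (forcing $K'=-K$, whence $C\subset K\cap(-K)={\rm bd}(K)$ would have empty interior) are excluded, so $\alpha\in(-1,1)$; then $m_{K'}\cdot m_M=\alpha\cos\theta<\cos\theta$, and the same argument as above gives $\Delta(K'\cap M)<\Delta(L)=\Delta(C)$, a contradiction. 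The crux of both parts is the sign analysis in the displayed inequality, which rests on both $\cos\theta$ and $\sin\theta$ being strictly positive; this is exactly where the hypothesis $\Delta(C)>\frac{\pi}{2}$ is indispensable.
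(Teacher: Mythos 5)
Your argument is correct, and it is genuinely different in character from what the paper offers: the paper gives no proof of Lemma \ref{l_smooth} at all, stating only that the argument is analogous to the two-dimensional Lemma 4.1 of \cite{LaMu} proved there for reduced bodies. You instead work with the unit normals in $E^{d+1}$: writing $\theta=|m_Km_M|$, so that $\Delta(L)=\pi-\theta$ and the hypothesis becomes $\cos\theta>0$, you reduce both claims to producing, from a separating hemisphere (if $a\notin C$) or from a second supporting hemisphere at $a$ (if $a$ were not smooth), a lune containing $C$ of thickness strictly smaller than $\Delta(C)$, contradicting the minimality of $\Delta(C)$ among thicknesses of lunes containing $C$. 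The computations are right: $a=(m_M-\cos\theta\,m_K)/\sin\theta$ is indeed the paper's center of $K/M$ (as one checks against the way this convention is used in the proof of Theorem \ref{touching ball}), the displayed identity $m_H\cdot m_M-\cos\theta=(\alpha-1)\cos\theta+\beta\sin\theta$ and the identity $m_{K'}\cdot m_M=\alpha\cos\theta$ hold because $m_M\in{\rm span}(m_K,a)$, and the degenerate cases are excluded correctly; for $K'\cap M$ being a genuine lune, which you leave implicit, it suffices to note $|m_{K'}\cdot m_M|=|\alpha|\cos\theta<1$. What your approach buys is a uniform, coordinate-level proof valid for every $d$ that pinpoints exactly where $\Delta(C)>\frac{\pi}{2}$ enters; what it costs is reliance on two auxiliary facts that you only sketch: first, that every lune containing $C$ has thickness at least $\Delta(C)$, with equality forcing both bounding hemispheres to support $C$ (your rotation-about-the-corners argument is sound once you add the final step that a lune whose two bounding hemispheres support $C$ has thickness at least ${\rm width}$ of $C$ for one of them, hence at least $\Delta(C)$; this fact is also available in \cite{L2}), and second, the nearest-point separation of $a$ from $C$ by a supporting hemisphere (correct by the first-variation and law-of-cosines argument, and essentially the separation tool already invoked in the paper's Lemma \ref{intersection}). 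With these two facts stated as lemmas or cited, your proof is complete and, unlike the paper, fully explicit in dimension $d$.
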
 

Having in mind the next lemma, we note the obvious fact that the diameter of a convex body $C \subset S^d$ is realized only for some pairs of points of ${\rm bd}(C)$.

\begin{lem}\label{diameter}
Assume that the diameter of a convex body $C \subset S^d$ is realized for points $p$ and $q$. 
The hemisphere $K$ orthogonal to $pq$ at $p$ and containing $q \in K$ supports $C$.
\end{lem}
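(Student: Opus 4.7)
My plan is to bound $C$ inside a spherical ball around $q$ that in turn fits inside $K$. First I would locate the centre $m$ of $K$ explicitly: since ${\rm bd}(K)$ passes through $p$ perpendicularly to the arc $pq$ and $K$ contains $q$, the centre $m$ must lie on the great circle through $p$ and $q$ at spherical distance $\frac{\pi}{2}$ from $p$, on the side for which $q$ belongs to the arc $pm$. This gives the clean identity $|qm|=\frac{\pi}{2}-|pq|$.

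Next, I would observe that, since $|pq|$ is the diameter of $C$, every $x\in C$ satisfies $|qx|\le|pq|$, so $C\subset B_{|pq|}(q)$. To contain this ball inside $K$, I would invoke the auxiliary equivalence (1) established inside the proof of Lemma \ref{convexhull}, namely $B_\rho(o)\subset H(m)$ if and only if $|om|\le\frac{\pi}{2}-\rho$. With $o=q$ and $\rho=|pq|$, the identity from the previous paragraph yields $B_{|pq|}(q)\subset K$, whence $C\subset K$. Combined with $p\in{\rm bd}(C)$ (the diameter is realised on the boundary, as noted just before the lemma) and $p\in{\rm bd}(K)$ by construction, this proves that $K$ supports $C$ at $p$.

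The step I expect to require the greatest care is pinning $m$ down on the $q$-side of $p$ so that $q\in pm$ and $|qm|=\frac{\pi}{2}-|pq|$; this geometric identification, together with the invocation of (1), implicitly presumes $|pq|\le\frac{\pi}{2}$, which is consistent with the paper's convention that balls have radius in $(0,\frac{\pi}{2}]$. If one wished to bypass the clean route above in that boundary regime, a backup argument via the spherical law of cosines in the triangle $pqr$ for a hypothetical $r\in C\setminus K$ would deliver an obtuse angle at $p$ (because $pq$ leaves $p$ into $\operatorname{int}(K)$ while $pr$ leaves $p$ into the opposite open hemisphere) and thus $|qr|>|pq|$, contradicting the diameter; this alternative again requires $|pq|\le\frac{\pi}{2}$ to convert negativity of $\cos\alpha$ into the required strict inequality.
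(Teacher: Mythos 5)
Your argument only covers the case $|pq| \le \frac{\pi}{2}$, and the justification you give for restricting to it is not valid: the paper's convention that \emph{balls} have radius in $(0,\frac{\pi}{2}]$ says nothing about the diameter of a convex body, which can be any value in $(0,\pi)$. Indeed the paper's own proof devotes its main effort to the case $\delta = \mathrm{diam}(C) > \frac{\pi}{2}$, and this case is exactly what is needed later (Corollary \ref{ortho}, Theorem \ref{diam=w} treat bodies of diameter larger than $\frac{\pi}{2}$). When $|pq| > \frac{\pi}{2}$ your setup breaks at the first step: the center $m$ of $K$ then lies strictly inside the arc $pq$, so the identity $|qm| = \frac{\pi}{2} - |pq|$ fails, and more fundamentally the set of points at distance at most $|pq|$ from $q$ is the complement of an open ball of radius $\pi - |pq|$ about the antipode $q'$ of $q$ --- a set far too large to be contained in any hemisphere --- so the containment chain $C \subset B_{|pq|}(q) \subset K$ cannot be salvaged. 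Your backup argument via the law of cosines has the same restriction, as you note. To close the gap one must use the convexity of $C$ and not just the distance bound: this is what the paper does, observing that all of $C$ avoids the interior of $B_{\pi-\delta}(q')$, that $K$ meets $B_{\pi-\delta}(q')$ only at $p$, and that a hypothetical point $b \in C \setminus K$ would force the arc $bp \subset C$ to enter the interior of $B_{\pi-\delta}(q')$, a contradiction.

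For the regime $|pq| \le \frac{\pi}{2}$ your reasoning is correct, and it is a pleasant (if slightly heavier) alternative to the paper's one-line argument that any $y \notin K$ satisfies $|yq| > |pq|$; one small point is that the auxiliary equivalence (1) in Lemma \ref{convexhull} is stated for $\rho < \frac{\pi}{2}$, so the boundary case $|pq| = \frac{\pi}{2}$ (where $m = q$ and $B_{\pi/2}(q) = K$) should be noted separately, as should the fact that $p \in \mathrm{bd}(C)$, which the paper records just before the lemma. But as it stands the proposal proves only half of the lemma.
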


\begin{proof}
Denote the diameter of $C$ by $\delta$.

Assume first that $\delta > \frac{\pi}{2}$.
The set of points at distance at least $\delta$ from $q$ is the ball $B_{\pi - \delta}(q')$, where $q'$ is the antipode of $q$. 
Clearly, $K$ has only $p$ in common with $B_{\pi - \delta}(q')$.

Since the diameter $\delta$ of $C$ is realized for $pq$, every point of $C$ is at distance at most $\delta$ from $q$.
Thus $C$ has empty intersection with the interior of $B_{\pi - \delta}(q')$.

Assume that $K$ does not contain $C$.
Then $C$ contains a point $b \not \in K$.
Observe that the arc $bp$ has nonempty intersection with the interior of $B_{\pi - \delta}(q')$  [the reason: $K$ is the only hemisphere touching $B_{\pi - \delta}(q')$ from outside at $p$]. 
On the other hand, by the convexity of $C$ we have $bp \subset C$.
This contradicts the fact from the preceding paragraph that 
$C$ has empty intersection with the interior of $B_{\pi - \delta}(q')$.
Consequently, $K$ contains $C$.

Now consider the case when $\delta \leq \frac{\pi}{2}$.
For every $y \not \in K$ we have $|pq| < |yq|$ which by $|pq| = \delta$ implies $y \not \in C$. Thus always if $y \in C$, then $y \in K$.
\end{proof}

\smallskip
Let us apply our Lemma \ref{diameter} for a convex body $C$ of diameter larger than $\frac{\pi}{2}$. 
Having in mind that the center $k$ of $K$ is in $pq$ and thus in $C$, by Part III of Theorem 1 in \cite{L2} we obtain that $\Delta (K \cap K^*) > \frac{\pi}{2}$. 
This gives the following corollary which implies the next one.
The symbol ${\rm diam} (C)$ denotes the diameter of $C$.

\begin{cor} \label{ortho}
Let $C \subset S^d$ be a convex body of diameter larger than $\frac{\pi}{2}$ and let 
${\rm diam} (C)$ be realized for points $p, q \in C$.
Take the hemisphere $K$ orthogonal to $pq$ at $p$ which supports $C$.
Then ${\rm width}_K (C) > \frac{\pi}{2}$. 
\end{cor}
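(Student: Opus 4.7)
My plan is to reduce the corollary to Lemma~\ref{diameter} together with Part III of Theorem~1 in \cite{L2}, already invoked in the remark preceding the statement. The key geometric fact driving the argument will be that the hypothesis ${\rm diam}(C) > \frac{\pi}{2}$ forces the center of the hemisphere $K$ to lie strictly inside the arc $pq$, and therefore inside $C$.

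The steps I would carry out, in order, are the following. First, I will invoke Lemma~\ref{diameter} to conclude that $K$ indeed supports $C$, so that ${\rm width}_K(C)$ is a well-defined quantity. Next, I will locate the center $k$ of $K$: since the bounding $(d-1)$-dimensional great sphere of $K$ is orthogonal to the arc $pq$ at $p$ and $q$ lies in $K$, the center $k$ must sit on the great circle through $p$ and $q$, at spherical distance $\frac{\pi}{2}$ from $p$, on the same side of $p$ as $q$. From $\frac{\pi}{2} < |pq| < \pi$ it then follows that $k$ is an interior point of the arc $pq$; since $p,q \in C$ and $C$ is convex, this yields $k \in pq \subset C$. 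Finally, I will fix any supporting hemisphere $K^*$ of $C$ for which $L = K \cap K^*$ realises the minimum thickness among lunes of the form $K \cap H$ containing $C$, and apply Part III of Theorem~1 in \cite{L2} to obtain $\Delta(L) > \frac{\pi}{2}$, which is exactly the required inequality ${\rm width}_K(C) > \frac{\pi}{2}$.

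The only step that should demand attention is the identification of where the center $k$ sits. Once one is comfortable with the fact that the center of a hemisphere is the pole of its bounding great sphere and that the orthogonality condition at $p$ forces this pole to travel along the great circle through $p$ and $q$, the metric computation placing $k$ in the interior of the arc $pq$ is a one-line check, the inclusion $k \in C$ is by convexity, and the two quoted results supply all the remaining substantive content. Thus the proof is essentially a bookkeeping argument built on top of Lemma~\ref{diameter} and the cited theorem.
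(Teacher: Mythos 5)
Your proposal is correct and follows the paper's own argument: Lemma~\ref{diameter} gives that $K$ supports $C$, the center $k$ of $K$ lies in the arc $pq$ (hence in $C$ by convexity) because ${\rm diam}(C)>\frac{\pi}{2}$, and Part III of Theorem 1 of \cite{L2} then yields $\Delta(K\cap K^*)>\frac{\pi}{2}$, i.e.\ ${\rm width}_K(C)>\frac{\pi}{2}$. The only difference is that you spell out the one-line verification that $k$ is an interior point of $pq$, which the paper states without elaboration.
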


\begin{cor} \label{family}
Let $C \subset S^d$ be a convex body of diameter larger than $\frac{\pi}{2}$ and let $\mathcal K$ denote the family of all hemispheres supporting $C$.  
Then $\max_{K \in \mathcal K} {\rm width}_K (C) > \frac{\pi}{2}$. 
\end{cor}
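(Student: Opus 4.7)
The plan is to derive this essentially as an immediate consequence of Corollary \ref{ortho}. Since $C$ is a convex body, it is closed and hence compact (as a closed subset of $S^d$), so the continuous function $(x,y) \mapsto |xy|$ on $C \times C$ attains its maximum. Therefore there exist points $p, q \in C$ for which the distance $|pq|$ equals ${\rm diam}(C) > \frac{\pi}{2}$; such $p, q$ must in fact lie on ${\rm bd}(C)$, as noted in the remark preceding Lemma \ref{diameter}.

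Next, I would invoke Lemma \ref{diameter} to produce the hemisphere $K$ that is orthogonal to the arc $pq$ at $p$ and supports $C$. By the very definition of $\mathcal{K}$, this $K$ belongs to $\mathcal{K}$. Then Corollary \ref{ortho} applies verbatim to this configuration and yields
\[
{\rm width}_K(C) > \frac{\pi}{2}.
\]

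Finally, since $K \in \mathcal{K}$, the maximum in the statement satisfies
\[
\max_{K' \in \mathcal{K}} {\rm width}_{K'}(C) \geq {\rm width}_K(C) > \frac{\pi}{2},
\]
which is exactly what is to be proved.

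There is no real obstacle here: the content of the corollary is essentially a repackaging of Corollary \ref{ortho}, moving from the existence of one good supporting hemisphere to the assertion that the maximum width over the whole supporting family exceeds $\frac{\pi}{2}$. The only ingredients being used beyond Corollary \ref{ortho} itself are the compactness of $C$ (to realize the diameter by a pair of points) and the fact that the hemisphere produced by Lemma \ref{diameter} is a member of $\mathcal{K}$ by construction.
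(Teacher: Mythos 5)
Your proposal is correct and follows essentially the same route as the paper: the paper derives Corollary \ref{family} directly from Corollary \ref{ortho}, which in turn comes from Lemma \ref{diameter} applied to a diameter-realizing pair $p,q$, exactly as you do. The only additions you make (compactness to realize the diameter, and noting $K \in \mathcal{K}$) are harmless elaborations of what the paper leaves implicit.
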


\vskip0.5cm
\section{Spherical bodies of constant width}

%\medskip
If for every hemisphere supporting a convex body $W \subset S^d$ the width of $W$ determined by $K$ is the same, we say that $W$ is a {\it body of constant width} (see \cite{L2} and for an application also \cite{HN}). 
In particular, spherical balls of radius smaller than $\frac{\pi}{2}$ are bodies of constant width. 
Also every spherical Reuleaux odd-gon (for the definition see \cite{L2}, p. 557) is a convex body of constant width.
Each of the $2^{d+1}$ parts of $S^d$ dissected by $d+1$ pairwise orthogonal $(d-1)$-dimensional spheres of $S^d$ is a spherical body of constant width $\frac{\pi}{2}$, which easily follows from the definition of a body of constant width.   
The class of spherical bodies of constant width is a subclass of the class of spherical reduced bodies considered in \cite{L2} and \cite{LaMu}, and mentioned by \cite{GJPW} in a larger context, (recall that a convex body $R \subset S^d$ is called {\it reduced} if $\Delta (Z) < \Delta (R)$ for every body $Z \subset R$ different from $R$, see also \cite{LM} for this notion in $E^d$). 

By the definition of width and by Claim 2 of \cite{L2}, if $W\subset S^d$ is a body of constant width, then every supporting hemisphere $G$ of $W$ determines a supporting hemisphere $H$ of $W$ for which $G\cap H$ is a lune of thickness $\Delta (W)$ such that the centers of $G/H$ and $H/G$ belong to the boundary of $W$. 
Hence
{\it every spherical body $W$ of constant width is an intersection of lunes of thickness $\Delta (W)$ such that the centers of the $(d-1)$-dimensional hemispheres bounding these lunes belong to ${\rm bd} (W)$}. 
Recall the related question from p. 563 of \cite{L2} if a convex body $W \subset S^d$ is of constant width provided every supporting hemisphere $G$ of $W$ determines at least one hemisphere $H$ supporting $W$ such that $G \cap H$ is a lune with the centers of $G/H$ and $H/G$ in ${\rm bd} (W)$.

Here is an example of a of spherical body of constant width on $S^3$.

\smallskip
\emph{Example}.
Take a circle $X \subset S^3$ (i.e., a set congruent to a circle in $E^2$) of a positive diameter $\kappa < \frac{\pi}{2}$, and a point $y \in S^3$ at distance $\kappa$ from every point $x \in X$. 
Prolong every spherical arc $yx$ by a distance $\sigma \leq {\pi \over 2} - \kappa$ up to points $a$ and $b$ so that $a, y, x, b$ are on one great circle in this order.
All these points $a$ form a circle $A$, and all points $b$ form a circle $B$. 
On the sphere on $S^3$ of radius $\sigma$ whose center is $y$ take the ``smaller" part $A^+$ bounded by the circle $A$. 
On the sphere on $S^3$ of radius $\kappa + \sigma$ with center $y$ take the ``smaller" part $B^+$ bounded by $B$.
For every $x \in X$ denote by $x'$ the point on $X$ such that $|xx'| = \kappa$.
Prolong every $xx'$ up to points $d, d'$ so that $d, x, x', d'$ are in this order and $|dx|= \sigma =|x'd'|$.
For every $x$ provide the shorter piece $C_x$ of the circle with center $x$ and radius $\sigma$ connecting the $b$ and $d$ determined by $x$ and also the shorter piece $D_x$ of the circle with center $x$ of radius $\kappa +\sigma$ connecting the $a$ and $d'$ determined by $x$. 
Denote by $W$ the convex hull of the union of $A^+$, $B^+$ and all pieces $C_x$ and $D_x$. 
It is a body of constant width $\kappa + 2\sigma$ (hint: for every hemisphere $H$ supporting $W$ and every $H^*$ the centers of $H/H^*$ and $H^*/H$ belong to ${\rm bd} (W)$ and the arc connecting them passes through one of our points $x$, or through the point $y$).

\begin{thm}\label{touching ball}  At every boundary point $p$ of a body $W \subset S^d$ of constant width $w > \pi/2$ we can inscribe a unique ball $B_{w- \pi/2}(p')$ touching $W$ from inside at $p$. 
What is more, $p'$ belongs to the arc connecting $p$ with the center of the unique hemisphere supporting $W$ at $p$, and $|pp'|=w-\frac{\pi}{2}$.
\end{thm}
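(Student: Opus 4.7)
The plan is to construct the inscribed ball explicitly from a min-lune through $p$ and then to deduce smoothness and uniqueness from the geometry of spherical balls. Fix a hemisphere $G=H(m)$ supporting $W$ at $p$. By the structural remark preceding the Example, the constant-width hypothesis provides a hemisphere $H=H(n)$ such that $L:=G\cap H$ is a lune of thickness $w$, $W\subset L$, and the centers $c_{G/H}$ and $c_{H/G}$ of the two bounding $(d-1)$-dimensional hemispheres lie on ${\rm bd}(W)$; in particular $|mn|=\pi-w$. By Lemma~\ref{l_smooth}, both $c_{G/H}$ and $c_{H/G}$ are smooth points of ${\rm bd}(W)$, so $H(m)$ is the unique hemisphere supporting $W$ at $c_{H/G}$.

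My first task is to argue $p=c_{H/G}$, which makes $H(m)$ the unique hemisphere supporting $W$ at $p$ as well. Were $p\neq c_{H/G}$, convexity of $W$ together with $W\subset G$ would force the arc joining $p$ and $c_{H/G}$ to lie inside ${\rm bd}(W)\cap{\rm bd}(G)$, producing a nondegenerate flat piece of ${\rm bd}(W)$ inside the great sphere ${\rm bd}(G)$. I would then have to show that such a flat piece is incompatible with constant width, by running the min-lune construction at a relative-interior point of the arc and tracking the location of the ``opposite'' center on ${\rm bd}(W)$; this smoothness step is the main obstacle of the plan.

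Once $p=c_{H/G}$ is in hand, let $p'$ be the point of arc $pm$ with $|pp'|=w-\pi/2$, so $|p'm|=\pi-w$. A direct computation in the $2$-plane spanned by $m$ and $n$, along the lines of the proof of Lemma~\ref{distance}, identifies $p'$ with $n$ and yields $|pn|=w-\pi/2$, so $p\in{\rm bd}(B_{w-\pi/2}(n))$. To establish $B_{w-\pi/2}(n)\subset W$, Lemma~\ref{intersection} reduces the inclusion to $B_{w-\pi/2}(n)\subset H(n')$ for every hemisphere $H(n')$ containing $W$, and formula $(1)$ in the proof of Lemma~\ref{convexhull} rewrites this as $|nn'|\le\pi-w$. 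For every such $H(n')$ the lune $H(n)\cap H(n')$ contains $W$, so its thickness $\pi-|nn'|$ is at least $\Delta(W)=w$, giving the required inequality.

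Uniqueness of the inscribed ball follows by smoothness of spherical balls: any ball of radius $w-\pi/2$ contained in $W$ and touching at $p$ is supported there by the unique hemisphere $H(m)$ supporting $W$ at $p$, which forces its center onto the arc $pm$ at distance $w-\pi/2$ from $p$, namely at $p'$. As noted, the main obstacle is the smoothness step; Lemma~\ref{l_smooth} only yields smoothness at the specific centers of min-lunes, and promoting it to every boundary point of $W$ requires exploiting the \emph{constancy} of the width in full, rather than merely the fact that its value is attained as a minimum by one enclosing lune.
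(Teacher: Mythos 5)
Your plan hinges on the step you yourself flag as the ``main obstacle,'' and that step is a genuine gap --- in fact the route you sketch for closing it cannot work. What you need is that the given touching point $p$ is the center of the bounding $(d-1)$-dimensional hemisphere lying in ${\rm bd}(G)$ of \emph{some} lune of thickness $w$ containing $W$ (in the paper's convention this is $c_{G/H}$, not $c_{H/G}$, since $G/H\subset{\rm bd}(G)$). The lune supplied by Claim 2 of \cite{L2} has \emph{its} center somewhere on ${\rm bd}(G)\cap{\rm bd}(W)$, but nothing forces that center to be $p$. You propose to exclude $p\neq c$ by showing that a nondegenerate flat arc in ${\rm bd}(W)\cap{\rm bd}(G)$ is incompatible with constant width $w>\frac{\pi}{2}$; but bodies of constant width greater than $\frac{\pi}{2}$ need \emph{not} be strictly convex. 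The paper proves strict convexity only for $w<\frac{\pi}{2}$ (Theorem \ref{strictly}), and indeed the polar body of a spherical Reuleaux triangle of width $\pi-w$ is a smooth body of constant width $w>\frac{\pi}{2}$ whose boundary contains three great-circle arcs (the polars of the three corners). So non-extreme boundary points genuinely occur, and your dichotomy's second horn cannot be ruled out. The statement that would rescue the argument --- for \emph{every} $p\in{\rm bd}(W)$ there is a lune of thickness $w$ containing $W$ with $p$ as a center of a bounding hemisphere --- is exactly Theorem \ref{center}, which the paper proves \emph{after} and \emph{by means of} Theorem \ref{touching ball}; invoking it here would be circular unless you supply an independent proof, which you do not.

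The remaining parts of your proposal are sound but only cover the favourable case. Once $p$ \emph{is} such a center, your identification of $p'$ with the center $n$ of the second hemisphere, the inclusion $B_{w-\pi/2}(n)\subset W$ via Lemma \ref{intersection} and formula (1) (using that any lune $H(n)\cap H(n')$ containing $W$ has thickness $\pi-|nn'|\ge w$), and the uniqueness argument via smoothness are all correct; this is essentially a re-derivation of Part III of Theorem 1 of \cite{L2}, which the paper simply cites, and it corresponds to the paper's Part 1 (extreme points, via Theorem 4 of \cite{L2}). What is missing is the analogue of the paper's Part 2: for a non-extreme $p$, the paper writes $p$ as a relative-interior point of the convex hull of finitely many extreme points (Lemma \ref{extreme}), places that hull in ${\rm bd}(K)$ (Lemma \ref{support}), and then propagates the inscribed balls from the extreme points to $p$ by the interpolation Lemma \ref{convexhull}. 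Some argument of this kind --- not a strict-convexity claim --- is what your proof still needs.
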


\begin{proof} 
Observe that if a ball touches $W$ at $p$ from inside, then there exists a unique hemisphere supporting $W$ at $p$ such that our ball touches this hemisphere at $p$.
So for any $\rho \in (0, \frac{\pi}{2})$ there is at most one ball of radius $\rho$ touching $W$ from inside at $p$.
Our aim is to show that always we can find one.

In the first part of the proof consider the case when $p$ is an extreme point of $W$.
By Theorem 4 of \cite{L2} there is a lune $L = K \cap M$ of thickness $w$ containing $W$ such that $p$ is the center of $K/M$. 
Denote by $m$ the center of $M$ and by $k$ the center of $K$. 
Clearly, $m\in pk$ and $|pm| = w- \frac{\pi}{2}$.
Since $\textrm{width}_M(W)=w$, by the third part of Theorem 1 of \cite{L2} the ball $B_{w- \pi/2}(m)$ touches $W$ from inside.
Moreover, it touches $W$ from inside at the center of $M^*/M$.
Since $K$ is one of these hemispheres $M^*$, our ball touches $W$ at $p$.
 
In the second part consider the case when $p$ is not an extreme point of $W$. 
From Lemma \ref{extreme} we see that $p$ belongs to the convex hull of a finite set $E$ of extreme points of $W$.   
We do not lose the generality assuming that $E$ is a minimum set of extreme points of $W$ with this property. 
Hence $p$ belongs to the relative interior of ${\rm conv} (E)$. 

Take a hemisphere $K$ supporting $W$ at $p$ and denote by $o$ the center of $K$.
Since $p$ belongs to the relative interior of ${\rm conv} (E)$, by Lemma \ref{support} we obtain ${\rm conv} (E) \subset {\rm bd} (K)$.
Moreover, ${\rm conv} (E)$ is a subset of the boundary of $W$.

We intend to show that for every $x \in {\rm conv} (E)$ the inclusion

$$B_{w- \frac{\pi}{2}}(x') \subset W \eqno (3)$$

\noindent
holds true, where $x'$ denotes the point on $ox$ at distance $w - \frac{\pi}{2}$ from $x$.

By Lemma \ref{convexhull} for  $w = \mu$, if (3) holds true for $x_1,x_2 \in {\rm conv} (E)$, then (3) is also true for every point of the arc $x_1x_2$. 
Applying this lemma a finite number of times and having in mind the first part of this proof, we conclude that (3) is true for every point of ${\rm conv} (E)$, so in particular for $p$. 
Clearly, the ball $B_{w- {\pi \over 2}}(p')$ supports $W$ at $p$ from inside. 

Both parts of the proof confirm the thesis of our theorem.
\end{proof}

By Lemma \ref{l_smooth} we obtain the following proposition generalizing Proposition 4.2 from  \cite{LaMu} for arbitrary dimension $d$. 
We omit an analogous proof.
  
\begin{pro}\label{smooth} 
Every spherical body of constant width larger than $\frac{\pi}{2}$ (and more general, every reduced body of thickness larger than $\frac{\pi}{2}$) of $S^d$ is smooth. 
\end{pro}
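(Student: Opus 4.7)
The plan is to mimic the 2-dimensional argument of Proposition 4.2 in \cite{LaMu}, substituting our $d$-dimensional Lemma \ref{l_smooth} for its 2-dimensional analogue. Fix a reduced body $R \subset S^d$ with $\Delta(R) > \frac{\pi}{2}$ and an arbitrary boundary point $p$ of $R$. The goal is to show that exactly one hemisphere supports $R$ at $p$.

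The key observation is the characterization of reduced bodies: for every hemisphere $G$ supporting $R$, there exists a hemisphere $H$ such that $L = G \cap H \supset R$ and $\Delta(L) = \Delta(R)$. Since $\Delta(R) > \frac{\pi}{2}$, Lemma \ref{l_smooth} applies, asserting that the centers $c_{G/H}$ and $c_{H/G}$ of the two bounding $(d-1)$-dimensional hemispheres of $L$ are smooth points of ${\rm bd}(R)$. The strategy is therefore to find, for the given $p$, a supporting hemisphere $G$ at $p$ and a corresponding $H$ such that $p$ itself is the center of $G/H$; smoothness at $p$ then follows immediately.

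For an extreme point $p$, this is supplied by Theorem 4 of \cite{L2}, exactly as was used in the proof of Theorem \ref{touching ball}: one obtains a lune of thickness $\Delta(R)$ containing $R$ with $p$ as the center of one bounding $(d-1)$-dimensional hemisphere. For a non-extreme $p$, I would use Lemma \ref{extreme} to express $p$ as a convex combination of at most $d+1$ extreme points of $R$, each already known to be smooth from the previous case, and then apply Lemma \ref{support}: any supporting hemisphere at $p$ must contain, in its boundary, the relative-interior face of $R$ spanned by these extreme points, and uniqueness at each of the extreme points pins down the supporting hemisphere at $p$ as well.

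The main technical obstacle will lie in this last step in dimensions $d \ge 3$: whereas on $S^2$ a one-dimensional arc in the boundary uniquely determines the great circle containing it, in higher dimensions two distinct $(d-1)$-dimensional great spheres can share a lower-dimensional great subsphere. Verifying that no such competing supporting hemisphere can arise at $p$ — so that smoothness at the extreme points of a face truly propagates to every point of the face — is where careful dimension-by-dimension bookkeeping on the codimensions of the supporting boundaries is needed, paralleling but extending the planar argument of \cite{LaMu}.
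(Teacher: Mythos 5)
Your argument is correct and is essentially the proof the paper has in mind (the paper omits it, pointing to the two-dimensional Proposition 4.2 of \cite{LaMu}): at an extreme point $p$, Theorem 4 of \cite{L2} supplies a lune $L \supset R$ with $\Delta(L)=\Delta(R)$ having $p$ as the center of a bounding $(d-1)$-dimensional hemisphere, and Lemma \ref{l_smooth} gives smoothness at $p$; the non-extreme case is then handled exactly by the ${\rm conv}(E)$ device the paper itself uses in Theorems \ref{touching ball} and \ref{center}. Two remarks. First, to invoke Lemma \ref{support} you should take $E$ to be a \emph{minimal} set of extreme points with $p \in {\rm conv}(E)$, so that $p$ lies in the relative interior of ${\rm conv}(E)$ (the paper makes this same reduction); only then does every hemisphere supporting $R$ at $p$ contain ${\rm conv}(E)$ in its boundary. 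Second, the ``main technical obstacle'' you flag in your last paragraph is not actually there: no codimension bookkeeping or spanning argument is needed, because if $G_1$ and $G_2$ both support $R$ at $p$, then each contains ${\rm conv}(E)$ in its boundary, hence each supports $R$ at any single extreme point $e \in E$; since $e$ is a smooth point of ${\rm bd}(R)$ by the extreme-point case, the supporting hemisphere at $e$ is unique, forcing $G_1 = G_2$. Uniqueness at one point of $E$ already pins down the hemisphere, regardless of how small the great subsphere spanned by ${\rm conv}(E)$ is.
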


From Corollary \ref{family} we obtain the following corollary which implies two next ones.

\begin{cor} \label{diamover}
If ${\rm diam} (W) >  \frac{\pi}{2}$ for a body of $W \subset S^d$ of constant width $w$, then $w >  \frac{\pi}{2}$.
\end{cor}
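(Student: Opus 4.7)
The plan is essentially a one-line deduction from Corollary \ref{family}. That corollary asserts that whenever a convex body $C \subset S^d$ has diameter greater than $\frac{\pi}{2}$, the maximum of ${\rm width}_K(C)$ over the family $\mathcal K$ of all hemispheres supporting $C$ exceeds $\frac{\pi}{2}$.

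I would apply this with $C = W$. The defining feature of a body of constant width $w$ is that ${\rm width}_K(W) = w$ for every $K \in \mathcal K$, so the maximum in Corollary \ref{family} degenerates: $\max_{K \in \mathcal K} {\rm width}_K(W) = w$. Combining this equality with the strict inequality supplied by Corollary \ref{family} under the hypothesis ${\rm diam}(W) > \frac{\pi}{2}$ gives $w > \frac{\pi}{2}$, which is exactly the claim.

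Since the argument is immediate from a result proved just a few lines earlier, there is no real obstacle; the only thing to be careful about is to note explicitly that constancy of the width collapses the maximum over supporting hemispheres to the single value $w$, so that the strict inequality transfers directly. No case analysis on $d$ or on the location of the diameter-realizing pair is needed, because all of that work has already been absorbed into Lemma \ref{diameter} and Corollaries \ref{ortho} and \ref{family}.
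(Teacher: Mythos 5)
Your proposal is correct and matches the paper exactly: the paper derives Corollary \ref{diamover} directly from Corollary \ref{family}, using the fact that constancy of the width makes $\max_{K \in \mathcal K} {\rm width}_K(W)$ equal to $w$, so the strict inequality transfers to $w$.
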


\begin{cor} \label{diamless}
For every body of $W \subset S^d$ of constant width $w \leq \frac{\pi}{2}$ we have ${\rm diam} (W) \leq  \frac{\pi}{2}$.
\end{cor}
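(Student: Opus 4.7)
The plan is essentially to observe that Corollary \ref{diamless} is the logical contrapositive of Corollary \ref{diamover}, so no new geometric work is required. Concretely, I would assume toward contradiction that $W \subset S^d$ is a body of constant width $w \le \frac{\pi}{2}$ whose diameter exceeds $\frac{\pi}{2}$. Then the hypothesis of Corollary \ref{diamover} is satisfied, so its conclusion gives $w > \frac{\pi}{2}$, contradicting $w \le \frac{\pi}{2}$. Hence ${\rm diam}(W) \le \frac{\pi}{2}$, as claimed.

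The only minor thing worth checking is that the implication in Corollary \ref{diamover} really is stated with strict inequality on both sides (it is: ${\rm diam}(W) > \frac{\pi}{2}$ implies $w > \frac{\pi}{2}$), so that taking the contrapositive yields precisely the non-strict inequalities appearing in the present corollary. There is no main obstacle; the statement is intended as an immediate logical rephrasing of the previous corollary, which is why the sentence preceding Corollary \ref{diamover} announces that it \emph{implies the two next ones}.
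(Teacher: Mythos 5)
Your proposal is correct and matches the paper's intended derivation: the paper explicitly introduces Corollary \ref{diamover} as implying the next corollaries, and Corollary \ref{diamless} is indeed just its contrapositive. Nothing further is needed.
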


\begin{cor} \label{H(p)}
{\it Let $p$ be a point of a body $W \subset S^d$ of constant width at most $\frac{\pi}{2}$. Then $W \subset H(p)$.}
\end{cor}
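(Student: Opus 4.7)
The plan is to deduce this immediately from Corollary \ref{diamless} together with the definition of a hemisphere. Recall that $H(p)$ denotes the hemisphere with center $p$, which by definition is the set of points of $S^d$ at spherical distance at most $\frac{\pi}{2}$ from $p$. So the statement $W \subset H(p)$ is equivalent to asserting that every point $q \in W$ satisfies $|pq| \leq \frac{\pi}{2}$.

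First I would invoke Corollary \ref{diamless}, which tells us that if $W$ has constant width $w \leq \frac{\pi}{2}$, then ${\rm diam}(W) \leq \frac{\pi}{2}$. Then, since both $p$ and any chosen $q$ lie in $W$, the definition of diameter yields $|pq| \leq {\rm diam}(W) \leq \frac{\pi}{2}$. Hence $q \in H(p)$, and since $q \in W$ was arbitrary, $W \subset H(p)$.

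There is no real obstacle here; the corollary is a direct reformulation of Corollary \ref{diamless} in terms of supporting hemispheres centered at points of $W$. The only point that deserves a brief mention is that the conclusion holds uniformly in the choice of $p \in W$, which is simply because the diameter bound is symmetric in the two endpoints. Given the brevity, I would present the argument in a single short paragraph rather than structuring it into multiple steps.
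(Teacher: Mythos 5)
Your argument is correct and matches the paper's intended reasoning: the paper derives Corollary \ref{H(p)} from Corollary \ref{diamover} via Corollary \ref{diamless}, exactly the chain you use, with the final step being the same observation that $|pq| \leq {\rm diam}(W) \leq \frac{\pi}{2}$ puts every $q \in W$ inside the hemisphere $H(p)$.
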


The following theorem generalizes Theorem 5.2 of \cite{LaMu} proved there for $d=2$ only.

\begin{thm} \label{strictly} 
Every spherical convex body of constant width smaller than $\frac{\pi}{2}$ on $S^d$ is strictly convex. 
\end{thm}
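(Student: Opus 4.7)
The plan is to argue by contradiction: assume $W$ of constant width $w < \pi/2$ contains an arc $ab$ with $a \ne b$ in its boundary, and derive a contradiction by combining the lune structure along $ab$ with the a priori bound ${\rm diam}(W) \le w$. Since this bound does not appear in the text above, I would establish it first as an auxiliary step.

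For the auxiliary bound, I would take $p, q \in W$ realizing ${\rm diam}(W)$ (existence by compactness). By Lemma \ref{diameter} the hemisphere $\widetilde K$ orthogonal to $pq$ at $p$ with $q \in \widetilde K$ supports $W$, and constant width then yields a partner $\widetilde K^*$ so that $\widetilde K \cap \widetilde K^*$ is a lune of thickness $w$ containing $W$, with centers $\widetilde c = c_{\widetilde K/\widetilde K^*}$ and $\widetilde c^* = c_{\widetilde K^*/\widetilde K}$ lying in ${\rm bd}(W)$. Since $p \in {\rm bd}(\widetilde K) \cap W \subset \widetilde K/\widetilde K^*$ and the arc $pq$ is perpendicular to ${\rm bd}(\widetilde K)$ at $p$ (entering $\widetilde K$), parametrizing this perpendicular arc by arclength $t$ from $p$ and imposing membership in $\widetilde K^*$ gives the condition $\tan t \le \cos|p\widetilde c|\tan w$; since $|p\widetilde c| \le \pi/2$, this yields $|pq| \le \arctan(\cos|p\widetilde c|\tan w) \le w$.

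For the main argument, I would fix $p$ in the relative interior of $ab$ and a supporting hemisphere $K$ of $W$ at $p$; Lemma \ref{support} gives $ab \subset {\rm bd}(K)$. By constant width, take a partner $K^*$ with $K \cap K^*$ a lune of thickness $w$ containing $W$ and centers $c = c_{K/K^*}$, $c^* = c_{K^*/K}$ in ${\rm bd}(W)$. Then $a, b \in {\rm bd}(K) \cap W \subset K/K^*$; the arcs $ca$ and $cb$ lie in ${\rm bd}(K)$, while $cc^*$ is perpendicular to ${\rm bd}(K)$ at $c$, so the spherical triangle $acc^*$ has a right angle at $c$, and the spherical law of cosines yields $\cos|ac^*| = \cos|ac|\cos w$. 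Since $a \ne b$, one of them, say $a$, differs from $c$, so $|ac| > 0$ and hence $|ac^*| > w$; on the other hand $a, c^* \in W$ combined with the auxiliary bound give $|ac^*| \le {\rm diam}(W) \le w$, a contradiction. The main obstacle will be the auxiliary bound ${\rm diam}(W) \le w$: it hinges on the explicit trigonometric computation of the exit distance of a perpendicular arc from the lune, which is elementary but somewhat fiddly to set up; the remaining pieces are short geometric applications of Lemmas \ref{support} and \ref{diameter} on top of the lune structure of a body of constant width.
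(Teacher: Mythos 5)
Your proposal is correct, and it follows the same contradiction skeleton as the paper (a supporting hemisphere touching $W$ at two points forces a point of $K/K^*$ other than its center $c$ to lie at distance greater than $w$ from the opposite center $c^*\in{\rm bd}(W)$, which is incompatible with ${\rm diam}(W)\le w$), but you justify the two key inequalities differently. The paper gets the first inequality by citing the first statement of Lemma 3 of \cite{L2}, and gets ${\rm diam}(W)=w$ by combining Corollary \ref{diamless} with Theorem 3 of \cite{L2}; you instead prove both facts from scratch: the spherical Pythagorean identity $\cos|ac^*|=\cos|ac|\cos w$ in the right triangle $acc^*$ (valid since $cc^*\perp{\rm bd}(K)$ at $c$), and the auxiliary bound ${\rm diam}(W)\le w$ via Lemma \ref{diameter} plus the explicit exit condition along the arc perpendicular to ${\rm bd}(\widetilde K)$ at $p$, namely $\cos t\,\sin w\,\cos|p\widetilde c|-\sin t\,\cos w\ge 0$, i.e. $\tan t\le\cos|p\widetilde c|\tan w$, which indeed gives $|pq|\le\arctan\bigl(\cos|p\widetilde c|\tan w\bigr)\le w$ since $|p\widetilde c|\le\frac{\pi}{2}$ and $w<\frac{\pi}{2}$ (I checked this computation; it is right, and it correctly rules out $t\ge\frac{\pi}{2}$ as well). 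What each route buys: the paper's argument is shorter because it outsources these estimates to \cite{L2}, while yours is self-contained within the present paper (using only Lemmas \ref{support} and \ref{diameter} and the displayed consequence of Claim 2 of \cite{L2} that the centers of the bounding hemispheres lie on ${\rm bd}(W)$), and it prudently avoids invoking Theorem \ref{diam=w}, which would be circular since its proof passes through Theorem \ref{center}, whose Part I uses the present theorem.
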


\begin{proof}
Take a body $W$ of constant width $w < \frac{\pi}{2}$ and assume it is not strictly convex. 
Then there is a supporting hemisphere $K$ of $W$ that supports $W$ at more than one point.
By Claim 2 of \cite{L2} the centers $a$ of $K/K^*$ and $b$ of $K^*/K$ belong to ${\rm bd} (W)$.
Since $K$ supports $W$ at more than one point, $K/K^*$ contains also a boundary point $x \not = a$ of $W$.
By the first statement of Lemma 3 of \cite{L2} we have $|xb| > |ab| = w$.
Hence ${\rm diam} (W) > w$. 

By Corollary \ref{diamless} we have ${\rm diam} (W) \leq \frac{\pi}{2}$.
By Theorem 3 of \cite{L2} we see that $w = {\rm diam} (W)$.
This contradicts the inequality  ${\rm diam} (W) > w$ from the preceding paragraph.
The contradiction means that our assumption that $W$ is not strictly convex must be false. 
Consequently, $W$ is strictly convex.
\end{proof}
 
On p. 566 of \cite{L2} the question is put if for every reduced spherical body $R \subset S^d$ and for every $p \in {\rm bd} (R)$ there exists a lune $L \supset R$ fulfilling $\Delta (L) = \Delta (R)$ with $p$ as the center of one of the two $(d-1)$-dimensional hemispheres bounding this lune. 
The following theorem gives the positive answer in the case of spherical bodies of constant width. 
It is a generalization of the version for $S^2$ given as Theorem 5.3 in \cite{LaMu}. The idea of the proof of our theorem below for $S^d$ substantially differs from the mentioned one for $S^2$.

\begin{thm} \label{center} 
For every body $W \subset S^d$ of constant width $w$ and every $p \in {\rm bd} (W)$ there exists a lune $L \supset W$ fulfilling $\Delta (L) = w$ with $p$ as the center of one of the two $(d-1)$-dimensional hemispheres bounding this lune.
\end{thm}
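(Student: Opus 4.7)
The plan is to split into three regimes according to the size of $w$, and in each to exhibit the required lune $L = K \cap M$ with $p$ as the center of one of its bounding hemispheres. For $w < \pi/2$, Theorem \ref{strictly} gives that $W$ is strictly convex, so $p$ is extreme and Theorem~4 of \cite{L2} applied at $p$ directly produces the lune. For $w = \pi/2$, take $K$ to be any hemisphere supporting $W$ at $p$ and set $M = H(p)$; Corollary \ref{H(p)} gives $W \subset M$, and since $p \in \mathrm{bd}(K)$ one checks that $\Delta(K \cap M) = \pi/2 = w$ and $p = c_{K/M}$.

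For the main case $w > \pi/2$, Proposition \ref{smooth} gives a unique supporting hemisphere $K = H(o)$ at $p$. Set $p' = \sin w \cdot p - \cos w \cdot o$; this is the point on the arc $op$ at spherical distance $w - \pi/2$ from $p$, and by Theorem \ref{touching ball} it is the center of the ball of radius $w - \pi/2$ inscribed in $W$ at $p$. Take $M = H(p')$. Using $\langle p, o \rangle = 0$, a direct computation yields $\langle o, p' \rangle = -\cos w$, whence $\Delta(K \cap M) = w$; moreover the projection of $m_M = p'$ onto $o^{\perp} = \mathrm{bd}(K)$ is $\sin w \cdot p$, which normalizes to $p$, so $p = c_{K/M}$. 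Thus the whole proof reduces to verifying $W \subset H(p')$.

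To handle the non-extreme case, apply Lemma \ref{extreme} to write $p \in \mathrm{conv}(E)$ for a minimal set $E = \{e_1, \ldots, e_k\}$ of at most $d+1$ extreme points of $W$ with $p$ in the relative interior of $\mathrm{conv}(E)$. By Lemma \ref{support}, $\mathrm{conv}(E) \subset \mathrm{bd}(K)$, and smoothness forces $K$ to be the unique supporting hemisphere at each $e_i$. Theorem~4 of \cite{L2} applied at every extreme $e_i$ yields a lune $K \cap M_i$ of thickness $w$ containing $W$ with $e_i = c_{K/M_i}$; the same computation as above forces $M_i = H(e_i')$ with $e_i' = \sin w \cdot e_i - \cos w \cdot o$, so $W \subset H(e_i')$ for every $i$.

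The remaining step, propagating $W \subset H(e_i')$ for all $i$ to $W \subset H(p')$, is the main obstacle. A direct spherical-convex-combination argument fails because the map $x \mapsto \sin w \cdot x - \cos w \cdot o$ from $\mathrm{bd}(K)$ into $S^d$ does not commute with spherical convex hulls once $\cos w \neq 0$. The approach I would pursue is to mirror the structure of the proof of Theorem \ref{touching ball}, applying a hemisphere-level analogue of Lemma \ref{convexhull} iteratively along arcs $e_i e_j \subset \mathrm{bd}(K)$; equivalently, one can seek to identify $H(p')$ as a supporting hemisphere by showing that the point $q = \cos w \cdot p + \sin w \cdot o$, which lies on $\mathrm{bd}(H(p'))$ at spherical distance $w$ from $p$, belongs to $\mathrm{bd}(W)$, so that Lemma \ref{diameter} gives the containment.
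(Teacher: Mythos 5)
Your Parts for $w<\frac{\pi}{2}$ and $w=\frac{\pi}{2}$ are fine and coincide with the paper's argument, and your reduction of the case $w>\frac{\pi}{2}$ is correctly set up: with $K=H(o)$ the unique supporting hemisphere at $p$ and $p'=\sin w\, p-\cos w\, o$, the desired lune is indeed $K\cap H(p')$, and Theorem 4 of \cite{L2} together with smoothness does give $W\subset H(e')$ for every extreme point $e\in E$. But the theorem is not proved, because you stop exactly at the step that carries all the difficulty: passing from $W\subset H(e_i')$ for the extreme points to $W\subset H(p')$ for the non-extreme point $p$. You yourself note that the naive convex-combination argument breaks down (correctly: $p'$ is not on the arc $e_i'e_j'$, so $H(e_i')\cap H(e_j')\not\subset H(p')$ in general, and adding $W\subset K$ does not repair the inequality), and what you offer instead are two unexecuted directions. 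The second of them — show that $q=\cos w\,p+\sin w\,o$ lies in ${\rm bd}(W)$ and apply Lemma \ref{diameter} — has an additional danger: to use Lemma \ref{diameter} you would need $|pq|$ to realize ${\rm diam}(W)$, i.e.\ essentially ${\rm diam}(W)=w$, but in the paper Theorem \ref{diam=w} is \emph{deduced from} Theorem \ref{center}, so this route is circular unless you prove that equality independently.

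For comparison, the paper closes precisely this gap by a different mechanism: instead of circumscribed hemispheres it works with the single ball $B=B_{w-\frac{\pi}{2}}(o)$ centered at the center $o$ of $K$, which touches $W$ from inside at the points $t_e$ (your $q$-type points for the extreme points $e\in E$, with $o\in et_e$). Setting $U={\rm conv}(Q\cup\{o\})$, it proves $U_W=U_B$ by contradiction: a point $x\in U_W\setminus U_B$ would force, via Lemma \ref{extreme}, an extreme point $z$ with $|zy'|>\frac{\pi}{2}$ for a suitable $y'$, and then Theorem \ref{touching ball} applied at $y$ produces a point $v\in W$ with $|zv|>w$, contradicting the bound from Lemma 3 of \cite{L2} for the lune through $z$. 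This yields exactly that $B$ touches $W$ at your point $q=t_p$, and Part III of Theorem 1 of \cite{L2} then manufactures the hemisphere $K_p^*$ with $p=c_{K/K_p^*}$. Some argument of this kind (or a genuinely new one) is required where your sketch currently says ``the approach I would pursue''; as written, the case $w>\frac{\pi}{2}$ with $p$ not extreme is a genuine gap.
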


\begin{proof}
Part I for $w < \frac{\pi}{2}$. 

By Theorem \ref{strictly} the body $W$ is strictly convex, which means that every its boundary point is extreme. 
Thus the thesis follows from Theorem 4 of \cite{L2}. 

\smallskip
Part II for $w = \frac{\pi}{2}$. 

If $p$ is an extreme point of $W$ we again apply Theorem 4 of \cite{L2}.

Consider the case when $p$ is not an extreme point. 
Take a hemisphere $G$ supporting $W$ at $p$.
Applying Corollary \ref{H(p)} we see that $W \subset H(p)$.
Clearly, the lune $H(p) \cap G$ contains $W$.
The point $p$ is at distance $\frac{\pi}{2}$ from every corner of this lune and also from every point of the opposite $(d-1)$-dimensional hemisphere bounding the lune. 
Hence this is a lune that we are looking for.

\smallskip
Part III, for $w > \frac{\pi}{2}$. 

By Lemma \ref{extreme} the point $p$ belongs to the convex hull ${\rm conv} (E)$ of a finite set $E$ of extreme points of $W$.
 We do not lose the generality assuming that $E$ is a minimum set of extreme points of $W$ with this property. 
Hence $p$ belongs to the relative interior of ${\rm conv} (E)$. 
By Proposition \ref{smooth} we know that there is a unique hemisphere $K$ supporting $W$ at $p$.
Since $p$ belongs to the relative interior of ${\rm conv} (E)$, by Lemma \ref{support} we have ${\rm conv} (E)\subset {\rm bd} (K)$.
Moreover, ${\rm conv} (E)$ is a subset of the boundary of $W$.

By Theorem 4 of \cite{L2} for every $e \in E$ there exists a hemisphere $K_e^*$ (it plays the part of $K^*$ in Theorem 1 of \cite{L2}) supporting $W$ such that the lune $K \cap K_e^*$ is of thickness $\Delta (W)$ with $e$ as the center of $K/K_e^*$.
By Proposition \ref{smooth}, for every $e$ the hemisphere $K_e^*$ is unique.
For every $e \in E$ denote by $t_e$ the center of $K_e^*/K$ and by $k_e$ the boundary point of $K$ such that $t_e \in ok_e$, where $o$ is the center of $K$. 
So $e, k_e$ are antipodes. 
Denote the set of all these points $k_e$ by $Q$. 

Clearly, the ball $B = B_{\Delta (W) - \frac{\pi}{2}}(o)$ (as in Part III of Theorem 1 in \cite{L2}) touches $W$ from inside at every point $t_e$. 
Moreover, from the proof of Theorem 1 of \cite{L2} and from the earlier established fact that every $e \in E$ is the center of $K/K_e^*$ and every $t_e$ is the center of $K_e^*/K$ we obtain that $o$ belongs to all the arcs of the form $et_e$. 

Put $U= {\rm conv}(Q \cup \{o\})$. 
Denote by $U_B$ the intersection of $U$ with the boundary of $B$, and by $U_W$ the intersection of $U$ with the boundary of $W$.
Having in mind this construction we see the following one-to-one correspondence between some pairs of points in $U_B$ and $U_W$. 
%Namely, between the pairs of points of $U_B$ and $U_W$ such that each pair is on the arc connecting $o$ with a point of ${\rm conv} (Q)$.
Namely, between the pairs of points of $U_B$ and $U_W$ such that each pair is on the arc connecting $o$ with a point of ${\rm conv} (Q)$.

Now, we will show that $U_W = U_B$. 
Assume the opposite. 
By the preceding paragraph, our opposite assumption means that there is a point $x$ which belongs to $U_W$ but not to $U_B$. 
Hence $|xo|>\Delta (W) - \frac{\pi}{2}$. 
Moreover, there is a boundary point $y$ of the $(d-1)$-dimensional great sphere bounding $K$ such that $o\in xy$ and a point $y' \in oy$ at distance $\Delta (W) -\frac{\pi}{2}$ from $y$. 

 We have $|xy'|=|xo|+|oy|- |yy'|> \left( \Delta (W) - \frac{\pi}{2}\right) +\frac{\pi}{2} - \left( \Delta (W) - \frac{\pi}{2}\right)= \frac{\pi}{2}$.  

By Lemma \ref{extreme} point $x$ belongs the convex hull of a finite set of extreme points of $W$.
Assume for a while that all these extreme points are at distance at most $\frac{\pi}{2}$ from $y'$. 
Therefore all of them are contained in $H(y')$. 
Thus their convex hull is contained in $H(y')$ and so $x\in H(y')$. 
This contradicts the fact established at preceding paragraph that $|xy'|>\frac{\pi}{2}$. 
The contradiction shows that at least one of these extreme points is at distance larger than $\frac{\pi}{2}$ from $y'$.
Take such a point $z$ for which  $|zy'| > \frac{\pi}{2}$.

Since $z$ is an extreme point of $W$, by Theorem 4 of \cite{L2} it is the center of one of the $(d-1)$-dimensional hemispheres bounding a lune $L$ of thickness $\Delta (W)$ which contains $W$. 
Hence by the third part of Lemma 3 of \cite{L2} every point of $L$ 
different from the center of the other $(d-1)$-dimensional hemisphere bounding $L$ is at distance smaller than $\Delta (W)$ from $z$. 
Taking into account, that the distance of these centers is $\Delta (W)$ we see that
the distance of every point of $L$, and in particular of $W$, from $z$ is at most $\Delta (W)$. 
 
By Theorem \ref{touching ball} the ball $B_{\Delta(W) - \frac{\pi}{2}}(y')$ touches $W$ from inside at $y$. 

For the boundary point $v$ of this ball such that $y'\in zv$ we have $|zv| = |zy'| + |y'v| > \frac{\pi}{2} + \left( \Delta (W) - \frac{\pi}{2}\right) = \Delta (W)$, which by $v\in W$ contradicts the result of the preceding paragraph. 
Consequently, $U_W = U_B$. 

Since $U_W=U_B$, the ball $B$ touches $W$ from inside at every point of $U_B$, in particular at the point $t_p$ such that $o\in pt_p$. 
Therefore by Part III of Theorem 1 in \cite{L2} there exists a hemisphere $K_p^*$ supporting $W$ at $t_p$, such that $t_p$ is the center of $K_p^*/K$, $p$ is the center of $K/K_p^*$ and the lune $L=K\cap K_p^*$ is of thickness $\Delta (W)$. 
Consequently, $L$ is a lune announced in our theorem.
\end{proof}

If the body $W$ from Theorem \ref{center} is of constant width greater than $\frac{\pi}{2}$, then  by Proposition \ref{smooth} it is smooth. 
Thus at every $p \in {\rm bd} (W)$ there is a unique supporting hemisphere of $W$, and so the lune $L$ from the formulation of this theorem is unique.
If the constant width of $W$ is at most $\frac{\pi}{2}$, there are non-smooth bodies of constant width (e.g., a Reuleaux triangle on $S^2$) and then for non-smooth $p \in {\rm bd} (W)$ we may have more such lunes. 

Our Theorem \ref{center} and Claim 2 in \cite{L2} imply the first sentence of the following corollary.
The second sentence follows from Proposition \ref{smooth} and the last part of Lemma 3 in \cite{L2}.

\begin{cor} \label{pq} 
For every convex body $W \subset S^d$ of constant width $w$ and for every $p \in {\rm bd} (W)$ there exists $q \in {\rm bd} (W)$ such that $|pq| = w$.  
If $w > \frac{\pi}{2}$, this $q$ is unique.
\end{cor}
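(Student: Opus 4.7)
The plan is to read off both statements directly from Theorem~\ref{center}, invoking Proposition~\ref{smooth} only for the uniqueness clause. The key object in both parts is the lune $L \supset W$ of thickness $w$ produced by Theorem~\ref{center}, which has $p$ as the center of one of its two bounding $(d-1)$-dimensional hemispheres.

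For the existence sentence, given $p \in {\rm bd}(W)$, I would apply Theorem~\ref{center} to obtain a lune $L = G \cap H \supset W$ of thickness $w$ in which $p$ is the center of $G/H$. Let $q$ denote the center of $H/G$. By the definition of thickness, $|pq| = \Delta(L) = w$, so it only remains to check that $q \in {\rm bd}(W)$. This is exactly Claim~2 of \cite{L2}, already invoked verbatim elsewhere in this paper (for instance in the proof of Theorem~\ref{strictly}): whenever a lune $K \cap K^* \supset W$ has thickness equal to $\Delta(W)$, the centers of its two bounding $(d-1)$-dimensional hemispheres both lie in ${\rm bd}(W)$.

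For the uniqueness clause under the assumption $w > \frac{\pi}{2}$, I would argue as follows. Suppose $q_1, q_2 \in {\rm bd}(W)$ both satisfy $|pq_i| = w$. By Proposition~\ref{smooth} the body $W$ is smooth, so there is a unique hemisphere $G$ supporting $W$ at $p$. Consequently the lune furnished by Theorem~\ref{center} at $p$ must be of the form $L = G \cap H$, and the preceding paragraph identifies the center $q^*$ of $H/G$ as a point of ${\rm bd}(W)$ with $|pq^*| = w$. Now $q_1 \in W \subset L$ with $|pq_1| = w = \Delta(L)$ and $p$ is the center of $G/H$; the last part of Lemma~3 in \cite{L2} says that every point of $L$ other than $q^*$ lies at distance strictly less than $\Delta(L)$ from $p$, which forces $q_1 = q^*$, and the same argument gives $q_2 = q^*$.

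The step I expect to be the main obstacle is the uniqueness half, because smoothness is genuinely needed to pin down the hemisphere $G$ at $p$: when $w \leq \frac{\pi}{2}$ the lune from Theorem~\ref{center} through $p$ is generally not unique and several distinct $q$'s can arise (as the Reuleaux triangle on $S^2$, mentioned right before the corollary, illustrates). Aside from this subtlety, the proof is a direct unpacking of the claim and lemma cited from \cite{L2}.
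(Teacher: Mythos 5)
Your proposal is correct and follows essentially the same route as the paper: the existence part is exactly Theorem~\ref{center} combined with Claim~2 of \cite{L2} (identifying the center of the opposite bounding hemisphere as the required $q \in {\rm bd}(W)$), and the uniqueness part uses Proposition~\ref{smooth} together with the last part of Lemma~3 of \cite{L2}, just as the paper indicates. (In fact, since both $q_1$ and $q_2$ lie in the single fixed lune $L$, the strict-distance statement of Lemma~3 already forces them to coincide with the center of $H/G$, so your appeal to smoothness is harmless but not strictly needed.)
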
 

\begin{thm} \label{diam=w}
If $W \subset S^d$ is a body of constant width $w$, then ${\rm diam} (W)=w$.
\end{thm}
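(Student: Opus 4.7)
The lower bound ${\rm diam} (W) \geq w$ is immediate from Corollary \ref{pq}, which produces $p, q \in {\rm bd} (W)$ with $|pq| = w$. For the reverse inequality ${\rm diam} (W) \leq w$, I would split by the size of $w$ relative to $\frac{\pi}{2}$: the case $w = \frac{\pi}{2}$ is handled at once by Corollary \ref{diamless}, giving ${\rm diam}(W) \leq \frac{\pi}{2} = w$.

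For $w > \frac{\pi}{2}$, I would fix an arbitrary $p \in {\rm bd}(W)$ and invoke Theorem \ref{center} to obtain a lune $L = K \cap K^*$ of thickness $w$ containing $W$, with $p$ as the center of one of the two $(d-1)$-dimensional hemispheres bounding $L$, say $p$ as the center of $K^*/K$. The crux is then to show $|py| \leq w$ for every $y \in L$. I would set up coordinates in $E^{d+1}$ so that the center of $K$ is $e_{d+1}$ and $p = e_1$, whence the center of $K^*$ takes the form $\cos(\beta)\, e_1 + \sin(\beta)\, e_{d+1}$ with $\beta = w - \frac{\pi}{2} > 0$. The two lune-defining inequalities $y \cdot e_{d+1} \geq 0$ and $y_1 \cos\beta + y_{d+1}\sin\beta \geq 0$, together with $\|y\|=1$, imply $y_1 \geq -\sin\beta = \cos w$, which is exactly $|py| \leq w$. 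Varying $p$ over ${\rm bd}(W)$ and using that the diameter of a convex body is attained by a pair of boundary points yields ${\rm diam}(W) \leq w$.

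For $w < \frac{\pi}{2}$ the same lune inequalities only give $|py| \leq \frac{\pi}{2}$, so I would instead invoke Theorem \ref{strictly}, which guarantees that $W$ is strictly convex. Assume for contradiction that some $p, q \in {\rm bd}(W)$ realize ${\rm diam}(W) > w$. By Lemma \ref{diameter} the hemisphere $K$ orthogonal to $pq$ at $p$ supports $W$, and strict convexity forces $K$ to be the unique supporting hemisphere at $p$. Theorem \ref{center} then supplies $K^*$ with $K \cap K^*$ a lune of thickness $w$ and $p$ the center of $K^*/K$. A brief coordinate computation (identical to the one above, now with $\beta = w - \frac{\pi}{2} < 0$) places the center $b$ of $K/K^*$ on the great-circle arc leaving $p$ perpendicular to ${\rm bd}(K)$ into $K$, at spherical distance $w$ from $p$. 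Because $pq$ leaves $p$ in the same direction and $|pq| > w$, the point $b$ lies strictly between $p$ and $q$ on the arc $pq \subset W$; strict convexity then forces $b \in {\rm int}(W)$, contradicting the fact $b \in {\rm bd}(W)$ guaranteed by Claim 2 of \cite{L2}. The principal obstacle is precisely this non-uniformity: narrow lunes ($w > \frac{\pi}{2}$) constrain $|py|$ to $w$ intrinsically, while wide lunes ($w < \frac{\pi}{2}$) only yield $\frac{\pi}{2}$ and must be supplemented by strict convexity.
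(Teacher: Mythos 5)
Your lower bound and the cases $w \ge \frac{\pi}{2}$ are correct: Corollary \ref{pq} gives ${\rm diam}(W)\ge w$, Corollary \ref{diamless} settles $w=\frac{\pi}{2}$, and your coordinate computation for $w>\frac{\pi}{2}$ is a valid direct proof of the fact the paper imports from the last part of Lemma 3 of \cite{L2} (in a lune of thickness $w>\frac{\pi}{2}$ every point is within distance $w$ of the center of a bounding hemisphere); only note that with the paper's convention $G/H\subset{\rm bd}(G)$, the point you place at $e_1$ is the center of $K/K^*$, not of $K^*/K$. The paper organizes the proof differently, splitting on ${\rm diam}(W)$ rather than on $w$: for ${\rm diam}(W)\le\frac{\pi}{2}$ it quotes Theorem 3 of \cite{L2}, and for ${\rm diam}(W)>\frac{\pi}{2}$ it first deduces $w>\frac{\pi}{2}$ from Corollary \ref{ortho} and then argues essentially as you do.

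The case $w<\frac{\pi}{2}$ contains a genuine gap. You claim that strict convexity (Theorem \ref{strictly}) forces the hemisphere $K$ orthogonal to $pq$ at $p$ to be the unique supporting hemisphere at $p$. Strict convexity does not imply smoothness: it says that no supporting hemisphere touches $W$ at two points, not that each boundary point has only one supporting hemisphere. The paper proves smoothness only for $w>\frac{\pi}{2}$ (Proposition \ref{smooth}) and explicitly remarks that for $w\le\frac{\pi}{2}$ there exist non-smooth bodies of constant width (a spherical Reuleaux triangle), which are nevertheless strictly convex. Without uniqueness, Theorem \ref{center} only supplies a lune $G\cap H\supset W$ with $p$ the center of $G/H$ for \emph{some} hemisphere $G$ supporting $W$ at $p$, which need not be your $K$; the center $b$ of the other bounding hemisphere then sits at distance $w$ from $p$ in the direction of the center of $G$, not along $pq$, and your contradiction evaporates. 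A repair close to your idea avoids Theorem \ref{center} altogether and uses constant width for this particular $K$: since ${\rm width}_K(W)=w$, there is a lune $K\cap K^*\supset W$ of thickness $w$; writing $k,k^*$ for the centers of $K,K^*$, every $y$ in this lune satisfies $|yk|\ge|kk^*|-\frac{\pi}{2}=\frac{\pi}{2}-w$, hence its distance to ${\rm bd}(K)$ is at most $w$. On the other hand ${\rm diam}(W)\le\frac{\pi}{2}$ by Corollary \ref{diamless}, so $q$ lies on the arc from $p$ towards $k$ at distance $|pq|\le\frac{\pi}{2}$, and its distance to ${\rm bd}(K)$ equals $|pq|>w$ — a contradiction. (Alternatively, follow the paper: if ${\rm diam}(W)\le\frac{\pi}{2}$ quote Theorem 3 of \cite{L2}, while ${\rm diam}(W)>\frac{\pi}{2}$ is impossible for $w<\frac{\pi}{2}$ by Corollary \ref{family}.)
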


\begin{proof} 
If ${\rm diam} (W) \le \frac{\pi}{2}$, then the thesis is an immediate consequence of Theorem 3 in \cite{L2}. 

Assume that ${\rm diam} (W) > \frac{\pi}{2}$.
Take an arc $pq$ in $W$ such that $|pq| = {\rm diam} (W)$.
By Corollary \ref{ortho} this hemisphere $K$ orthogonal to $pq$ at $p$ which contains $q$, contains also $W$.
The center of $K$ lies strictly inside $pq$ and thus by Part III of Theorem 1 in \cite{L2} we have $w>\frac{\pi}{2}$

Having in mind Theorem \ref{center}, consider  
a lune $L \supset W$ with $\Delta (L) = \Delta (W)$ such that $p$ is the center of a $(d-1)$-dimensional hemisphere bounding $L$.
Clearly, $q\in W \subset L$.
Since $W$ is of constant width $w > \frac{\pi}{2}$, we have  $\Delta (L) > \frac{\pi}{2}$.

Thus from the last part of Lemma 3 of \cite{L2} it easily follows that the center of the other $(d-1)$-dimensional hemisphere bounding $L$ is a farthest point of $L$ from $p$.
Since it is at the distance $w$ from $p$, we obtain $w \ge |pq| = {\rm diam} (W)$.

On the other hand, by Proposition 1 of \cite{L2} we have $w\le {\rm diam} (W)$. 

As a consequence, ${\rm diam} (W) = w$.
\end{proof}

\section{Constant width and constant diameter}

We say that a convex body $W \subset S^d$ is {\it of constant diameter} $w$ if the following two conditions hold true 
\begin{list}{}{}
\item  {\rm (i)} ${\rm diam} (W) =w$, 
\item  {\rm (ii)} for every boundary point $p$ of $W$ there exists a boundary point $p'$ of $W$ with $|pp'| = w$.
\end{list}
 
This definition is analogous to the Euclidean notion (compare the beginning of Part 7.6 of \cite{YaBo} for the Euclidean plane, and the bottom of p. 53 of \cite{ChGr} also for higher dimensions).
Here is a theorem similar to the 
planar Euclidean version from \cite{YaBo} (see the beginning of Part 7.6).

\medskip

\begin{thm} \label{iff} 
Every spherical convex body $W \subset S^d$ of constant width $w$ is of constant diameter $w$.
Every spherical convex body $W \subset S^d$ of constant diameter $w \ge \frac{\pi}{2}$ is of constant width $w$. 
\end{thm}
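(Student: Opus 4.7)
The plan is to prove the two implications in turn. The first one---constant width $w$ implies constant diameter $w$---is essentially immediate from earlier results: Theorem~\ref{diam=w} gives ${\rm diam}(W)=w$, establishing (i), and Corollary~\ref{pq} supplies, for every $p\in{\rm bd}(W)$, a point $p'\in{\rm bd}(W)$ with $|pp'|=w$, establishing (ii).

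For the converse, fix a supporting hemisphere $G$ of $W$ and aim to prove ${\rm width}_G(W)=w$. By compactness there is a supporting hemisphere $G^*$ with $t:={\rm width}_G(W)=\Delta(G\cap G^*)$, and Claim~2 of~\cite{L2} places both centers $a:=c_{G/G^*}$ and $b:=c_{G^*/G}$ in ${\rm bd}(W)$, with $|ab|=t$. Since $a,b\in W$, the upper bound $t\le{\rm diam}(W)=w$ is immediate. For the matching lower bound, constant diameter applied to $a$ yields $a'\in{\rm bd}(W)$ with $|aa'|=w$, and $a'\in W\subset G\cap G^*$ lies in the lune of thickness $t$. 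A short coordinate computation (choose coordinates with $g=e_1$, $a=e_2$ and the lune given by $p_1\ge 0$, $p_2\sin t\ge p_1\cos t$) shows that the maximum of $|ap|$ over $p$ in this lune equals $\max(t,\pi/2)$, achieved at $b$ when $t>\pi/2$ and attained exactly on the corner $(d-2)$-sphere ${\rm bd}(G)\cap{\rm bd}(G^*)$ when $t\le\pi/2$. Thus $w\le\max(t,\pi/2)$, and for $w>\pi/2$ this forces $t\ge w$, so combined with the upper bound ${\rm width}_G(W)=w$ in this case.

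The main obstacle is the boundary case $w=\pi/2$, since then $\pi/2\le\max(t,\pi/2)$ is vacuous and allows $t<\pi/2$. To finish I plan a contradiction argument exploiting that ${\rm diam}(W)\le\pi/2$ forces $W\subset H(p)$ for every $p\in W$ (any two points of $W$ are at distance at most $\pi/2$). Assuming $t<\pi/2$, the partner $a'$ must realize the in-lune maximum $\pi/2$, hence lies on the corner $(d-2)$-sphere; rotate coordinates so that $a=e_2$ and $a'=e_3$. The joint constraint $W\subset G\cap G^*\cap H(a)\cap H(a')$ then reads $q_1,q_2,q_3\ge 0$ and $q_2\sin t\ge q_1\cos t$. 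Interior points $x=(0,\cos s,\sin s,0,\ldots,0)$ of the arc $aa'\subset W$ lie in ${\rm bd}(G)\cap W\subset{\rm bd}(W)$, and any partner $x'\in W$ with $\langle x,x'\rangle=0$ satisfies $\cos s\cdot x'_2+\sin s\cdot x'_3=0$ with both summands nonnegative, forcing $x'_1=x'_2=x'_3=0$ and pushing $x'$ into the $(d-3)$-subsphere orthogonal to $e_1,e_2,e_3$. For $d=2$ that subsphere is empty and one is done; for $d\ge 3$, either no such partner exists in $W$ (immediate contradiction) or the new partner $a''\in W$ adds the constraint $W\subset H(a'')$, and applying the same reasoning to an interior point of the spherical triangle ${\rm conv}\{a,a',a''\}\subset{\rm bd}(G)\cap W$ pushes the next partner into a $(d-4)$-subsphere. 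After at most $d-1$ iterations the available subsphere is empty, yielding the contradiction and closing the case $w=\pi/2$.
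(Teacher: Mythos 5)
Your proposal is correct, and its first half plus the overall skeleton of the converse (bound ${\rm width}_G(W)\le w$, then split into $w>\frac{\pi}{2}$ and $w=\frac{\pi}{2}$) match the paper; the interesting divergence is in how the two cases are executed. For $w>\frac{\pi}{2}$ the paper first proves separately that the minimal thickness $w'$ exceeds $\frac{\pi}{2}$ and then quotes the last part of Lemma 3 of \cite{L2} (the farthest point of a lune of thickness $>\frac{\pi}{2}$ from one center is the other center), applying (ii) at the center $b$ of $G^*/G$; your single coordinate computation $\max_{p\in L}|ap|=\max(t,\frac{\pi}{2})$ (which I checked: minimizing $p_2$ over $p_1\ge 0$, $p_2\sin t\ge p_1\cos t$ gives $\cos t$ at the other center when $t>\frac{\pi}{2}$, and $0$, exactly on the corners, when $t<\frac{\pi}{2}$) folds both steps into one and also supplies the fact the paper gets from its Lemma \ref{distance}. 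For the crucial case $w=\frac{\pi}{2}$ the paper argues via a maximal set $P_{\max}$ of at most $d-1$ pairwise $\frac{\pi}{2}$-distant points of $W$ on the corner sphere (using the regular-simplex fact on $S^{d-2}$), forms ${\rm conv}(P_{\max}\cup\{b\})$ and applies (ii), Lemma \ref{support} and Lemma \ref{distance} to contradict maximality; you instead build the orthogonal family iteratively in explicit coordinates, letting the dimension count terminate the process after at most $d-1$ applications of (ii). This is the same underlying idea (accumulating pairwise-orthogonal boundary points of $W$ on ${\rm bd}(G)$ until the sphere runs out of room), but your version is more self-contained, replacing the counting lemma and the auxiliary lemmas by the linear-algebraic nonnegativity argument, at the cost of redoing in coordinates what those lemmas encapsulate. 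Two small remarks: you invoke Claim 2 of \cite{L2} to put both centers $a,b$ in ${\rm bd}(W)$, whereas the paper only uses Corollary 2 of \cite{L2} for the center of $G^*/G$; this is fine (Claim 2 is a statement about arbitrary minimal lunes, as the paper's own use of it in Theorem \ref{strictly} shows, and by the lune's symmetry you could equally run your argument from $b$). Also, "attained exactly on the corner sphere when $t\le\frac{\pi}{2}$" is literally false at $t=\frac{\pi}{2}$ (the maximum is attained on the whole opposite bounding hemisphere), but this boundary case never occurs in your argument since there $t<w=\frac{\pi}{2}$.
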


\begin{proof}  
For the proof of the first statement of our theorem assume that $W$ is of constant width $w$. 
Theorem \ref{diam=w} implies (i) and Corollary \ref{pq} implies (ii), which means that $W$ is of constant diameter $w$. 

Let us prove the second statement. 
Let $W \subset S^d$ be a spherical body of constant diameter.
We have to show that $W$ is a body of constant width $w$.

Consider an arbitrary hemisphere $K$ supporting $W$.
As an immediate consequence of two facts from \cite{L2}, namely Theorem 3 and Proposition 1, we obtain that

$${{\rm width}}_K (W) \leq w.\eqno (4)$$

Let us show that ${\rm width}_K (W) = w$.

Make the opposite assumption (so that ${\rm width}_K (W) \not = w$) in order to provide an indirect proof of this equality. 
By (4) this opposite assumption implies that ${\rm width}_K (W) < w$. 

Consider two cases.

\smallskip
At first consider the case when $w > \pi/2$. 

Put $w' ={\rm width}_K (W)$.
There exists a hemisphere $M$ supporting $W$ such that $\Delta (K \cap M) = w'$. 
Denote the center of $K/M$ by $a$ and the center of $M/K$ by $b$. 
From Corollary 2 of \cite{L2} we see that $b \in {\rm bd} (W)$.

We have $\frac{\pi}{2} < w'$ since the opposite means $w' \leq \frac{\pi}{2}$ and then every point of the lune $K \cap M$ is at distance at most $\frac{\pi}{2}$ from the center $b$ of $M/K$ (for $w' = \frac{\pi}{2}$ this is clear by $K \cap M \subset H(b)$, and consequently this is also true if $w' < \frac{\pi}{2}$).
Since $b$ is a boundary point of our body $W$ of constant diameter $w > \pi/2$, we get a contradiction to {\rm (ii)}. 

Since $b$ is a boundary point of the body $W$ of constant diameter, by the assumption {\rm (ii)} there exists $b' \in {\rm bd} (W)$ such that $|bb'| = w$. 
By the definition of the thickness of a lune, we have $|ab| = w'$. 
Observe that the last part of Lemma 3 of \cite{L2} implies that $|uc_{H/G}| \leq |c_{G/H}c_{H/G}|$ for every point $u$ of the lune $H \cap G$. 
This observation applies to our lune $K \cap M$ since $\Delta (K \cap M) >  \frac{\pi}{2}$ (i.e, $w' >  \frac{\pi}{2}$).
Hence we obtain $|b'b| \leq |ab|$, which by the two first sentences of this paragraph gives $w \leq w'$.
This contradicts the inequality $w' < w$ resulting from our opposite assumption that ${\rm width}_K (W) \not = w$.

Consequently,  ${\rm width}_K (W) = w$.

\smallskip
Now consider the case when $w = \frac{\pi}{2}$. 

From ${\rm width}_K (W) < w$ (resulting from our opposite assumption) we obtain  ${\rm width}_K (W) < \pi/2$.
Thus $\Delta (K \cap K^*) < \frac{\pi}{2}$.
Denote by $b$ the center of $K^*/K$.
From Corollary 2 of \cite{L2} we see that $b \in {\rm bd} (W)$.

The set $D = (K/K^*)\cap (K^*/K)$ of corner points of $K\cap K^*$
is isomorphic to $S^{d-2}$.
Moreover, $S^k$ contains at most $k+1$ points pairwise distant by $\frac{\pi}{2}$, which follows from the fact (which is easy to show) that {\it every set of at least $k+2$ points pairwise equidistant on $S^k$ must be the set of vertices of a regular simplex inscribed in $S^k$} (still the distances of these vertices are not $\frac{\pi}{2}$). 
Putting $k= d-2$, we see that $D$ contains at most $d-1$ points pairwise distant by $\frac{\pi}{2}$.
 Therefore there exists a set $P_{max}$ of the maximum number (being at most $d-1$) of points of $W \cap D$ pairwise distant by $\frac{\pi}{2}$. 

Put $T={\rm conv} (P_{max} \cup \{b\})$.
Clearly, $T\subset W$, and even more, since moreover $T\subset {\rm bd} (K^*)$ and $W\subset K^*$, we obtain $T\subset {\rm bd} (W)$.
Take a point $x$ from the relative interior of $T$.  
The inclusion $T\subset {\rm bd} (W)$ implies that $x \in {\rm bd} (W)$. 
Hence by {\rm (ii)} there exists $y \in {\rm bd} (W)$ such that $|xy|= \frac{\pi}{2}$. 
By Lemma \ref{support} we have $T \subset {\rm bd} (H(y))$.
By this inclusion and $b \in T$ we obtain $|by|=\frac{\pi}{2}$.
Thus by Lemma \ref{distance} we have $y \in D$.
As a consequence, the set $P_{max} \cup \{y\}$ is a set of points of $W \cap D$
pairwise distant by $\frac{\pi}{2}$.
This set has a greater number of points than the set $P_{max}$.
This contradiction shows that our assumption ${\rm width}_K (W) \not = w $ is wrong. 
So ${\rm width}_K (W) = w$.

\smallskip
In both cases, from the arbitrariness of the hemisphere $K$ supporting our convex body $W$ we get that $W$ is a body of constant width $w$. 
\end{proof}

\noindent
{\bf Problem.} Is every spherical body of constant diameter $w < \frac{\pi}{2}$ a body of constant width $w$?


\begin{thebibliography}{99}

\bibitem{ChGr} G. D. Chakerian, H. Groemer, {\it Convex bodies of constant width}, Convexity and its applications, 49–-96,
Birkhauser, Basel, 1983.

\bibitem {DGK} L. Danzer, B. Gr\"unbaum, V. Klee, {\it Hellys theorem and its relatives}, in Proc. of Symp. in Pure Math. vol. VII, Convexity, 1963, pp. 99--180.  

\bibitem{GJPW} B. Gonzalez Merino, T. Jahn, A. Polyanskii, G. Wachsmuth, {\it Hunting for reduced polytopes}, to appear in Discrete Comput. Geom. (see also arXiv:1701.08629v1).

\bibitem{Ha} H. Hadwiger, {\it Kleine Studie zur kombinatorischen Geometrie der Sph\"are}, Nagoya Math. J. {\bf 8} (1955), 45--48.

\bibitem{HN}
H. Han and T. Nishimura, Self-dual shapes and spherical convex bodies of constant width $\pi/2$, {\it J. Math. Soc. Japan} {\bf 69} (2017), 1475--1484. 

\bibitem{L2} M. Lassak, {\it Width of spherical convex bodies}, Aequationes Math. {\bf 89} (2015), 555-567.

\bibitem{LM} M. Lassak, H. Martini, {\it Reduced convex bodies in Euclidean space -- a survey}, Expositiones Math. {\bf 29} (2011), 204--219. 

\bibitem{LaMu} M. Lassak, M. Musielak, {\it Reduced spherical convex bodies}, to appear (see also arXiv:1607.00132v1).

\bibitem{Le} K. Leichtweiss, {\it Curves of constant width in the non-Euclidean geometry}, Abh. Math. Sem. Univ. Hamburg {\bf 75} (2005), 257--284.

\bibitem{Sa} L. A. Santalo, {\it Note on convex spherical curves}, Bull. Amer. Math. Soc. {\bf 50} (1944), 528--534. 

\bibitem{YaBo} I. M. Yaglom, V. G. Boltyanskij, {\it Convex figures}, Moscov 1951. (English translation: Holt, Rinehart and Winston, New York 1961). 

\bibitem {VB} 
G. Van Brummelen, {\it Heavenly mathematics. The forgotten art of spherical trigonometry.} Princeton University Press (Princeton, 2013). 

\end{thebibliography}
\end{document}